\theoremstyle{plain}
\newtheorem{thm}{Theorem}
\newtheorem{lem}{Lemma}
\newtheorem{prop}{Proposition}
\theoremstyle{remark}
\theoremstyle{definition}
\newtheorem{defn}{Definition}
\theoremstyle{remark}
\theoremstyle{definition}
\newtheorem*{condS}{\it Condition S}
\newtheorem*{condP}{\it Condition P}
\newcommand{\FF}{\mathbb{F}}
\newcommand{\KK}{\mathbb{K}}
\newcommand{\NN}{\mathbb{N}}
\newcommand{\YY}{\mathbb{Y}}
\newcommand{\F}{\mathscr{F}}
\newcommand{\Y}{\mathscr{Y}}
\newcommand{\E}{\mathsf{E}}
\newcommand{\prob}{\mathsf{P}}
\newcommand{\eps}{\varepsilon}
\renewcommand{\phi}{\varphi}
\newcommand{\Bigmid}{\; \Bigl\vert \;}
\newcommand{\ftrue}{{f^\star}}
\newcommand{\fbest}{{f^\circ}}
\newcommand{\fhat}{\hat f}
\newcommand{\fbar}{\bar f}
\newcommand{\nm}{\mathsf{N}}
\newcommand{\Kbar}{\bar K}
\newcommand{\Vbar}{\bar V}
\begin{document}

\title{On convergence rates of Bayesian predictive densities and posterior distributions}
\author{
Ryan Martin \\ 
Department of Mathematics, Statistics, and Computer Science \\ 
University of Illinois at Chicago \\ 
\url{rgmartin@math.uic.edu} \\
\mbox{} \\
Liang Hong \\
Department of Mathematics \\
Bradley University \\
\url{lhong@bradley.edu}
}
\date{\today}

\maketitle

\begin{abstract}
Frequentist-style large-sample properties of Bayesian posterior distributions, such as consistency and convergence rates, are important considerations in nonparametric problems.  In this paper we give an analysis of Bayesian asymptotics based primarily on predictive densities.  Our analysis is unified in the sense that essentially the same approach can be taken to develop convergence rate results in iid, mis-specified iid, independent non-iid, and dependent data cases.  
\medskip

\emph{Keywords and phrases:} Density estimation; Hellinger distance; Kullback--Leibler divergence; Markov process; nonparametric; separation.
\end{abstract}

\section{Introduction}
\label{S:intro}

In Bayesian nonparametric problems, asymptotic concentration properties of the posterior distribution are often key to motivating a particular choice of prior.  Indeed, for infinite-dimensional problems, elicitation of subjective priors is difficult and a theory of objective priors for remains elusive, so large-sample properties of the posterior are often what drives the choice of prior.  A desirable (frequentist) property can be summarized roughly as follows: for a given ``true model'' and prior, as more and more data becomes available, the posterior distribution becomes more and more concentrated around this true model with large probability.  Early efforts along these lines are given in \citet{doob1949} and \citet{schwartz1965}.  Stronger results, some including rates of convergence, are presented in \citet{bsw1999}, \citet{ggr1999}, \citet{ggv2000}, \citet{shen.wasserman.2001}, \citet{ghosalvaart2001, ghosalvaart2007}, \citet{tokdar2006}, and \citet{walker2007}.  Modern efforts include extensions to non-Euclidean sample spaces \citep{bhattacharya.dunson.2010}, more complex models \citep{pati.dunson.tokdar.2011}, and misspecified models \citep{kleijn, shalizi2009, lian2009}. 

This paper presents a sort of unified analysis of Bayesian posterior convergence rates based on predictive densities.  These predictive densities are fundamental quantities in Bayesian statistical inference---they are the Bayes estimates of the density under a variety of loss functions.  This connection with predictive densities is not new, but the extent to which we depend on these quantities gives our analysis a strong Bayesian flavor.  Moreover, we show how essentially the same techniques can be used to develop convergence rate theorems for a variety of models, including iid, mis-specified iid, independent non-iid, and dependent data.  In particular, we prove (apparently) new Cesaro-style convergence rate results for predictive densities, in each of the four contexts above, under weaker conditions than usual for posterior convergence rate theorems.  We also develop a fundamental lemma, also based on predictive densities, which helps bound the numerator of the posterior probability for sets not-too-close to the true data-generating density.  This result is similar to Proposition~1 in \citet{walker2007}, but the proof is different and applies almost word-for-word in a variety of contexts.  It also relies an interesting notion of separation of points and sets, apparently due to \citet{choi.ramamoorthi.2008}.  This lemma is then used to prove posterior convergence rate theorems.  

The remainder of the paper is organized as follows.  Section~\ref{S:prelim} develops the notation and terminology used throughout the paper, in particular, the notion of prior thickness at the true data-generating density, and separation of sets from this same density.  Sections~\ref{S:predictive} and \ref{S:posterior} cover predictive density and posterior convergence rates, respectively, both in the simplest iid context.  In Section~\ref{S:posterior}, we prove an auxiliary result that demonstrates the sieve+covering style conditions in \citet{ggv2000} are weaker than the prior summability conditions in \citet{walker2007}.  The results on predictive density and posterior convergence rate theorems are extended to the mis-specified iid, independent non-iid, and dependent cases in Sections~\ref{S:miss}--\ref{S:markov}, respectively.  Finally, some concluding remarks are given in Section~\ref{S:discuss}.

\section{Bayesian nonparametrics}
\label{S:prelim}

\subsection{Notation and definitions}  
\label{SS:notation}

Let $\YY$ be a Polish space equipped with its Borel $\sigma$-algebra $\Y$.  Suppose $Y_{1:n} = (Y_1,\ldots,Y_n)$, $n \geq 1$, are independent $\YY$-valued observations with common distribution $F$, and that $F$ has a density $f = dF/d\mu$ with respect to some $\sigma$-finite measure $\mu$ on $(\YY,\Y)$.  Let $\FF$ be the set of all such densities $f$ and $\F$ its Borel $\sigma$-algebra.  Then a prior distribution $\Pi$ for $f$ is a probability measure on the measurable space $(\FF,\F)$.  Following Bayes' theorem, the posterior distribution of $f$, given $Y_{1:n}$, can be written as  
\begin{equation}
\label{eq:post1}
\Pi_n(A) = \Pi(A \mid Y_{1:n}) = \frac{\int_A \prod_{i=1}^n f(Y_i) \, \Pi(df)}{\int_{\FF} \prod_{i=1}^n f(Y_i) \, \Pi(df)}, \quad A \in \F.
\end{equation}
We require a topology on $\FF$, and here we focus on the Hellinger topology.  The Hellinger distance $H$ is given by $H(f,f') = \{\int (f^{1/2}-{f'}^{1/2})^2\,d\mu\}^{1/2}$, for $f,f' \in \FF$.  %It is well-known that $(\FF,H)$ is a complete separable metric space.  

For describing large-sample properties of the posterior, it is standard to assume that there is a ``true density'' $\ftrue$ from which the data $Y_1,\ldots,Y_n$ are observed.  It shall be required that the prior $\Pi$ puts a sufficient amount of mass around this $\ftrue$; the precise conditions on $\Pi$ are stated in Section~\ref{SS:thickness}.  With ``true density'' $\ftrue$, it is typical to rewrite the posterior \eqref{eq:post1} as 
\begin{equation}
\label{eq:post2}
\Pi_n(A) = \frac{\int_A R_n(f) \,\Pi(df)}{\int_{\FF} R_n(f) \,\Pi(df)}, \quad A \in \F, 
\end{equation}
where $R_0(f) \equiv 1$ and 
\begin{equation}
\label{eq:lr}
R_n(f) = \prod_{i=1}^n f(Y_i) / \ftrue(Y_i), \quad n \geq 1.
\end{equation}
In what follows, we will occasionally refer to the posterior $\Pi_n$, restricted to a given set $A$.  By that we mean the measure $\Pi_n^A$ defined as $\Pi_n^A(\cdot) = \Pi_n(A \cap \cdot) / \Pi_n(A)$.  Also, $\lesssim$ and $\gtrsim$ will denote inequality up to a universal constant.

Convergence rates of the posterior $\Pi_n$ concerns the amount of probability assigned to (expanding) sets that do not contain the true density $\ftrue$ when $n$ is large.  Let $(\eps_n)$ be a positive vanishing sequence.  Then the posterior $\Pi_n$ has a Hellinger convergence rate $\eps_n$ if $\Pi_n(\{f: H(\ftrue,f) \gtrsim \eps_n\}) \to 0$ in probability.  Here, and in what follows, the ``in probability'' qualification is with respect to $\prob=\prob_{\ftrue}^\infty$, the product distribution, under $\ftrue$, of the infinite data sequence $Y_{1:\infty}=(Y_1,Y_2,\ldots)$.

\subsection{Prior support conditions}
\label{SS:thickness}

In order for the posterior distribution to concentrate around $\ftrue$, some support conditions on the prior $\Pi$ are needed.  For example, if there exists a set $A \ni \ftrue$ such that $\Pi(A) = 0$, then, trivially, the posterior cannot concentrate around $\ftrue$.  To avoid these kinds of degeneracies, it is typical to assume that $\Pi$ satisfies the Kullback--Leibler property, i.e., that $\Pi(\{f: K(\ftrue,f) < \eps\}) > 0$ for all $\eps > 0$, where $K(\ftrue,f) = \int \log(\ftrue/f)\ftrue\,d\mu$ is the Kullback--Leibler divergence of $f$ from $\ftrue$.   See \citet{wu.ghosal.2008, wu.ghosal.2010} for sufficient conditions and a host of examples.  While the Kullback--Leibler property itself is not a necessary condition for posterior convergence, it does make up part of an important and useful set of sufficient conditions, developed by \citet{schwartz1965}.  Indeed, the Kullback--Leibler property alone is enough to imply that the posterior is weakly consistent.  But extra conditions, beyond the Kullback--Leibler property, are generally needed to establish strong consistency; see \citet{choi.ramamoorthi.2008}.  

To establish rates of convergence, something even stronger than the usual Kullback--Leibler property is needed.  Set $V(\ftrue,f) = \int \{\log(\ftrue/f)\}^2 \ftrue \,d\mu$.  

\begin{defn}
\label{def:thick}
Let $(\eps_n)$ be a positive sequence such that $\eps_n \to 0$ and $n\eps_n^2 \to \infty$.  The prior $\Pi$ is said to be $\eps_n$-\emph{thick} at $\ftrue$ if, for some constant $C > 0$, 
\begin{equation}
\label{eq:thick}
\Pi(\{f: K(\ftrue,f) \leq \eps_n^2,\, V(\ftrue,f) \leq \eps_n^2\}) \geq e^{-Cn\eps_n^2}.
\end{equation}
\end{defn}

This is exactly condition (2.4) in \citet{ggv2000}, which they motivate with entropy considerations.  Since \eqref{eq:thick} is stronger than the Kullback--Leibler property, prior thickness can be seen as a support condition on the prior, guaranteeing that the prior assigns a sufficient amount of mass near $\ftrue$.  Beyond this intuition, the following technical lemma, giving a lower bound on the denominator in \eqref{eq:post2}, is a consequence of prior thickness.  See \citet[][Lemma~8.1 and the proof of Theorem~2.1]{ggv2000}.  

\begin{lem}
\label{lem:thick}
Let $I_n = \int R_n(f) \,\Pi(df)$ be the denominator in \eqref{eq:post2}.  If $\Pi$ is $\eps_n$-thick at $\ftrue$, then $\prob(I_n \leq e^{-cn\eps_n^2}) \to 0$ for any $c > C+1$ with $C$ as in \eqref{eq:thick}.  
\end{lem}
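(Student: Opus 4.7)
The plan is to lower-bound $I_n$ by restricting the integral to the Kullback--Leibler-type ball
\[
B_n = \{f: K(\ftrue,f) \leq \eps_n^2,\ V(\ftrue,f) \leq \eps_n^2\},
\]
on which prior thickness gives $\Pi(B_n) \geq e^{-Cn\eps_n^2}$. Writing $I_n \geq \Pi(B_n) \cdot \Pi(B_n)^{-1}\int_{B_n} R_n(f)\,\Pi(df)$ and applying Jensen's inequality to the (concave) logarithm with respect to the probability measure $\Pi(\cdot \cap B_n)/\Pi(B_n)$, we get
\[
\log I_n \;\geq\; -Cn\eps_n^2 + Z_n, \qquad Z_n := \frac{1}{\Pi(B_n)} \int_{B_n} \log R_n(f)\,\Pi(df).
\]
So the event $\{I_n \leq e^{-cn\eps_n^2}\}$ is contained in $\{Z_n \leq -(c-C)n\eps_n^2\}$, and it suffices to show the latter has vanishing probability for any $c > C+1$.

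Next I would compute the mean and control the variance of $Z_n$ under $\prob$. Since $\log R_n(f) = -\sum_{i=1}^n \log\{\ftrue(Y_i)/f(Y_i)\}$ with iid summands, Fubini gives
\[
\E[Z_n] = -\frac{n}{\Pi(B_n)} \int_{B_n} K(\ftrue,f)\,\Pi(df) \;\geq\; -n\eps_n^2,
\]
using the defining bound on $K$ in $B_n$. For the variance, Jensen applied to $x \mapsto x^2$ on the same probability measure together with independence yields
\[
\var(Z_n) \;\leq\; \frac{1}{\Pi(B_n)}\int_{B_n} \var\bigl(\log R_n(f)\bigr)\,\Pi(df) \;\leq\; \frac{n}{\Pi(B_n)}\int_{B_n} V(\ftrue,f)\,\Pi(df) \;\leq\; n\eps_n^2,
\]
where the second inequality uses $\var(\log(\ftrue(Y_1)/f(Y_1))) \leq \E[\{\log(\ftrue(Y_1)/f(Y_1))\}^2] = V(\ftrue,f)$.

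Finally, a Chebyshev step closes the argument. Centering at the mean and using $\E[Z_n] \geq -n\eps_n^2$,
\[
\prob\bigl(Z_n \leq -(c-C)n\eps_n^2\bigr) \;\leq\; \prob\bigl(Z_n - \E[Z_n] \leq -(c-C-1)n\eps_n^2\bigr) \;\leq\; \frac{\var(Z_n)}{(c-C-1)^2 n^2 \eps_n^4} \;\leq\; \frac{1}{(c-C-1)^2\, n\eps_n^2},
\]
which tends to zero since $n\eps_n^2 \to \infty$ and $c-C-1 > 0$.

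The only delicate step is the variance bound, where one must be careful to apply Jensen to the squared deviation under the \emph{probability measure} $\Pi(\cdot \cap B_n)/\Pi(B_n)$ (so the cross terms from different $f$'s disappear in expectation rather than being bounded crudely) and to remember that $V(\ftrue,f)$ controls the raw second moment, hence the variance, of $\log(\ftrue(Y_1)/f(Y_1))$. Once that is in place, the calibration $c > C+1$ emerges naturally from the $-Cn\eps_n^2$ coming from prior thickness plus the $-n\eps_n^2$ coming from the mean of $Z_n$.
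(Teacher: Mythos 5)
Your proof is correct and reproduces the standard argument behind Lemma~8.1 of Ghosal, Ghosh, and van der Vaart (2000), which the paper cites in lieu of a proof: restrict and normalize $\Pi$ to the Kullback--Leibler neighborhood, apply Jensen to the logarithm, bound the resulting average of log-likelihood ratios in mean and variance via the thickness conditions, and close with Chebyshev (the $c>C+1$ threshold falling out from the $-C$ of thickness plus the $-1$ from the mean bound). One small imprecision in your closing commentary only: Jensen applied under the normalized prior does not make the cross-covariance terms over pairs $f,g$ vanish --- it bounds the double integral of covariances by the single integral of variances --- but the displayed chain of inequalities is exactly right and the conclusion stands.
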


Next is a simple application of Lemma~\ref{lem:thick}, similar to Proposition~4.4.2 in \citet{ghoshramamoorthi}, that will be used in the proof of the main results.  

\begin{lem}
\label{lem:prior.to.posterior}
Assume $\Pi$ is $\eps_n$-thick at $\ftrue$.  For a sequence $(U_n) \subset \F$, suppose that $\Pi(U_n) \lesssim e^{-rn\eps_n^2}$, where $r > C+1$, with $C$ as in \eqref{eq:thick}.  Then $\Pi_n(U_n) \to 0$ in probability.  
\end{lem}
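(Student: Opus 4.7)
The plan is to write $\Pi_n(U_n)=N_n/I_n$ where $N_n=\int_{U_n}R_n(f)\,\Pi(df)$ and $I_n=\int_{\FF}R_n(f)\,\Pi(df)$, and control the numerator and denominator separately. Since $r>C+1$, I can choose a constant $c$ with $C+1<c<r$. For the denominator, Lemma~\ref{lem:thick} immediately gives $\prob(I_n\leq e^{-cn\eps_n^2})\to 0$, so with probability tending to one we have $\Pi_n(U_n)\leq e^{cn\eps_n^2}N_n$.

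For the numerator, the key observation is that $R_n(f)$ is a likelihood ratio, so for each fixed $f\in\FF$,
\begin{equation*}
\E[R_n(f)]=\prod_{i=1}^n \int \frac{f(y)}{\ftrue(y)}\ftrue(y)\,d\mu(y)=1.
\end{equation*}
Applying Fubini's theorem (the integrand is nonnegative), $\E[N_n]=\int_{U_n}\E[R_n(f)]\,\Pi(df)=\Pi(U_n)\lesssim e^{-rn\eps_n^2}$. Markov's inequality then yields, for any $\delta>0$,
\begin{equation*}
\prob\bigl(e^{cn\eps_n^2}N_n>\delta\bigr)\leq \delta^{-1}e^{cn\eps_n^2}\E[N_n]\lesssim \delta^{-1}e^{(c-r)n\eps_n^2}\to 0,
\end{equation*}
since $c<r$ and $n\eps_n^2\to\infty$.

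Putting the two halves together, for arbitrary $\delta>0$,
\begin{equation*}
\prob\bigl(\Pi_n(U_n)>\delta\bigr)\leq \prob\bigl(I_n\leq e^{-cn\eps_n^2}\bigr)+\prob\bigl(e^{cn\eps_n^2}N_n>\delta\bigr),
\end{equation*}
and both terms vanish as $n\to\infty$, which gives $\Pi_n(U_n)\to 0$ in probability.

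There is not really a hard step here; the argument is a standard numerator/denominator decomposition. The only thing one has to be careful about is the choice of constant: the hypothesis $r>C+1$ is exactly what creates a nonempty window of admissible $c$'s, allowing Lemma~\ref{lem:thick} to be applied on one side and the Markov estimate on the numerator to beat $e^{cn\eps_n^2}$ on the other. This same scheme will presumably be reused in the mis-specified, non-iid, and Markov sections, with $\E[R_n(f)]=1$ replaced by an analogous identity or bound appropriate to each setting.
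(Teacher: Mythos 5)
Your proof is correct and follows essentially the same route as the paper's: the same numerator/denominator split, the same choice of $c\in(C+1,r)$, the same Markov-plus-Fubini bound on the numerator (which the paper states without spelling out $\E[R_n(f)]=1$), and the same total-probability decomposition at the end.
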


\begin{proof}
Write $\Pi_n(U_n) = L_n/I_n$.  Using Markov's inequality and the assumption on $\Pi(U_n)$, it is easy to check that $\prob(L_n > e^{-cn\eps_n^2}) \lesssim e^{-(r - c)n\eps_n^2}$.  Therefore, if $c \in (C+1, r)$, then $\prob(e^{cn\eps_n^2}L_n > \eta) \to 0$ for any $\eta > 0$.  Similarly, from Lemma~\ref{lem:thick}, for the same $c$, $\prob(I_n \leq e^{-cn\eps_n^2}) \to 0$.  Then by the law of total probability, 
\begin{equation}
\label{eq:total.probability}
\begin{split}
\prob\{\Pi_n(U_n) > \eta\} & = \prob(L_n/I_n > \eta) \\
& = \prob(L_n/I_n > \eta, I_n \leq e^{-cn\eps_n^2}) + \prob(L_n/I_n > \eta, I_n > e^{-cn\eps_n^2}) \\
& \leq \prob(I_n \leq e^{-cn\eps_n^2}) + \prob(L_n/I_n > \eta, I_n > e^{-cn\eps_n^2}) \\
& \leq \prob(I_n \leq e^{-cn\eps_n^2}) + \prob(e^{cn\eps_n^2}L_n > \eta).
\end{split}
\end{equation} 
Both quantities on the right-hand side vanish with $n$, so $\Pi_n(U_n) \to 0$ in probability.  
\end{proof}

\subsection{Convexity and separation}
\label{SS:convexity}

\citet{choi.ramamoorthi.2008} make use of two important properties for subsets $A$ of $\FF$.  Here we define and discuss these properties.  

\begin{defn}
\label{def:convex}
A set $A \subseteq \FF$ is convex if, for any probability measure $\Phi$ supported on $A$, the expectation, $f_\Phi = \int f \,\Phi(df)$, also belongs to $A$.  
\end{defn}

Examples of convex subsets of $\FF$ include balls, i.e., all those $f$ within a specified distance from a center $f_0$.  For an important example, let $h=H^2/2$, a slight modification of the squared Hellinger distance.  Choose a point $f_0 \in \FF$ and let $A=\{f: h(f_0,f) \leq r\}$.  Now, take any probability measure $\Phi$ supported on $A$.  Then by convexity of $h$ and definition of $A$, we have 
\[ h(f_0,f_\Phi) \leq \int_A h(f_0,f) \,\Phi(df) \leq r. \]
Therefore, $f_\Phi$ is in $A$ and, hence, $A$ is convex.  In the applications that follow, the probability measure $\Phi$ will often be a truncated version of the posterior distribution.  

\begin{defn}
\label{def:separate}
 A density $\ftrue \in \FF$ and a set $A \subseteq \FF$ are $\delta$-separated (with respect to $h$) if $h(\ftrue,f) > \delta$ for all $f$ in $A$.  
\end{defn}

For an important example, choose $r > 0$ and $f_0$ such that $H(\ftrue,f_0) > r$.  Then $\ftrue$ and $A=\{f: H(f_0,f) < r/2\}$ are $\delta$-separated, with $\delta=r^2/8$.  To see this, note that the triangle inequality implies
\[ H(\ftrue,f) \geq H(\ftrue,f_0) - H(f_0,f) \quad \forall\; f \in A. \]
From the definitions of $f_0$ and $A$, the right-hand side is strictly greater than $r/2$.  Therefore, $h(\ftrue,f) > r^2/8$, so $\ftrue$ and $A$ are $(r^2/8)$-separated (with respect to $h$).  

In the applications that follow, we shall extend this idea in two directions.  First, in some cases, we need separation with respect to distances other than Hellinger distance $H$ (or $h$).  Second, we shall consider sequences of sets $(A_n)$ and sequences of numbers $(\delta_n)$.  Then the notion of $\delta_n$-separation of a density $\ftrue$ and sets $A_n$ is straightforward.

\section{Convergence rates for predictive densities}
\label{S:predictive}

Predictive densities are fundamental quantities in Bayesian analysis.  Indeed, they are the Bayes density estimators under a variety of different loss functions.  In particular, the predictive density of $Y_i$, given $Y_1,\ldots,Y_{i-1}$, is 
\[ \fhat_{i-1}(y) = \int f(y) \Pi_{i-1}(df), \]
the posterior expectation of $f(y)$.  For example, in a density estimation problem with Hellinger distance as the loss function, the predictive density $\fhat_n$ is the Bayes estimator of $f$ in the sense that it minimizes Bayes risk.  

Our first result develops a Kullback--Leibler convergence rate for predictive densities in a Cesaro sense.  The proof is based on calculations in \citet{barron1987}.  

\begin{prop}
\label{prop:barron}
For a given vanishing sequence $(\eps_n)$, let $\KK_n = \{f: K(\ftrue,f) \leq \eps_n^2\}$.  If $\log\Pi(\KK_n) \gtrsim -n\eps_n^2$, then $n^{-1} \sum_{i=1}^n \E\{ K(\ftrue, \fhat_{i-1}) \} \lesssim \eps_n^2$.  
\end{prop}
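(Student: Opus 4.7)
The plan is to adapt Barron's classical change-of-measure argument. The crucial fact is that $\fhat_{i-1}(y) = \int f(y)\,\Pi_{i-1}(df)$ is, under the Bayesian model $(\Pi, f)$, the conditional density of $Y_i$ given $Y_{1:i-1}$, so the one-step predictives chain together to give the Bayesian joint marginal
\[
m_n(Y_{1:n}) \;=\; \prod_{i=1}^n \fhat_{i-1}(Y_i) \;=\; \int_{\FF} \prod_{i=1}^n f(Y_i)\,\Pi(df).
\]
Under $\prob = \prob_{\ftrue}^\infty$ the $Y_i$'s are iid $\ftrue$, and $\fhat_{i-1}$ is $Y_{1:i-1}$-measurable, so the conditional expectation of the $i$-th log-ratio is exactly $K(\ftrue, \fhat_{i-1})$. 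The tower property then yields the clean identity
\[
\sum_{i=1}^n \E\{K(\ftrue, \fhat_{i-1})\} \;=\; \E\Bigl[\log \frac{\prod_{i=1}^n \ftrue(Y_i)}{m_n(Y_{1:n})}\Bigr],
\]
which is nothing but the Kullback--Leibler divergence between the true joint law and the Bayesian marginal. This reduces the problem to upper-bounding that single divergence, i.e., lower-bounding $\E[\log m_n(Y_{1:n})]$.

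For the lower bound I would restrict the prior integral to $\KK_n$ and apply Jensen's inequality to $\log$. Writing $\Pi^{\KK_n}$ for the prior conditioned on $\KK_n$,
\[
\log m_n(Y_{1:n}) \;\ge\; \log \Pi(\KK_n) + \int_{\KK_n} \sum_{i=1}^n \log f(Y_i)\,\Pi^{\KK_n}(df).
\]
Taking $\prob$-expectation and swapping the order of integration by Fubini gives
\[
\E[\log m_n(Y_{1:n})] \;\ge\; \log \Pi(\KK_n) + n\E[\log \ftrue(Y_1)] - n\!\int_{\KK_n}\! K(\ftrue, f)\,\Pi^{\KK_n}(df) \;\ge\; \log \Pi(\KK_n) + n\E[\log \ftrue(Y_1)] - n\eps_n^2,
\]
since $K(\ftrue, f) \le \eps_n^2$ on $\KK_n$. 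Substituting into the identity above, the $n\E[\log \ftrue(Y_1)]$ terms cancel and dividing by $n$ produces
\[
n^{-1}\sum_{i=1}^n \E\{K(\ftrue, \fhat_{i-1})\} \;\le\; -n^{-1}\log \Pi(\KK_n) + \eps_n^2 \;\lesssim\; \eps_n^2,
\]
by the hypothesis $\log \Pi(\KK_n) \gtrsim -n\eps_n^2$.

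The only delicate point is justifying the Fubini exchange and the algebraic cancellation of the (possibly infinite) quantity $\E[\log \ftrue(Y_1)]$. Both concerns resolve because $f \in \KK_n$ forces $K(\ftrue, f) < \infty$, and hence $\E[\log f(Y_1)] = \E[\log \ftrue(Y_1)] - K(\ftrue, f)$ is meaningful in the sense that the difference is well-defined; the \emph{difference} $\E[\log \ftrue(Y_1)/\fhat_{i-1}(Y_i)]$ is the only quantity that ever appears in the final bound, so no genuine infinity is encountered. Measurability of $f \mapsto \log f(y)$ on the product space, needed to invoke Fubini, is standard for densities with respect to the $\sigma$-finite base measure $\mu$. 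This is the main technical hurdle; once it is dispatched, the proof is essentially a one-line Jensen calculation.
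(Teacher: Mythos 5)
Your proof is correct and takes essentially the same route as the paper: factor the Bayesian marginal into a product of one-step predictives, identify the Cesaro sum with $K(\ftrue^n, m_n)$, restrict the prior mixture to $\KK_n$, and apply Jensen to conclude. The one cosmetic difference is where Jensen enters: you apply it pointwise to $\log$ inside the prior integral and then invoke Fubini, so the quantity $\E[\log\ftrue(Y_1)]$ appears explicitly on both sides before cancelling; the paper instead applies convexity of $K(\ftrue^n,\cdot)$ after taking expectation, which keeps everything packaged as a Kullback--Leibler divergence and never separates the log-ratio into two pieces. Your version therefore carries the extra (acknowledged) burden of arguing that $\E[\log\ftrue(Y_1)]$ can be cancelled even when it is not obviously finite --- a burden the paper's formulation sidesteps entirely --- but the underlying inequality and the conclusion are identical, and your remark that $K(\ftrue,f)<\infty$ on $\KK_n$ is the relevant saving grace is the right instinct.
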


\begin{proof}
Let ${\ftrue}^n$ denote the joint density for an iid sample $(Y_1,\ldots,Y_n)$, i.e., the $n$-fold product of the $\ftrue$.  Likewise, let $\fhat^n$ denote the joint density of $(Y_1,\ldots,Y_n)$ under the Bayesian model with prior $\Pi$, i.e., $\fhat^n = \int f^n \, \Pi(df)$.  Since densities are non-negative, 
\[ \int_\FF f^n \, \Pi(df) \geq \int_{\KK_n} f^n \, \Pi(df) = \Pi(\KK_n) \int f^n \, \Pi^{\KK_n}(df), \]
where $\Pi^{\KK_n}$ is the prior $\Pi$ restricted and normalized to $\KK_n$.  Therefore, if we define $\Pi(\KK_n) \fhat^{n,\KK_n}$ as the lower bound above, then 
\begin{align*}
n^{-1} K({\ftrue}^n, \fhat^n) & \leq n^{-1} \{ K({\ftrue}^n, \fhat^{n,\KK_n}) - \log\Pi(\KK_n)\} \\
& \leq n^{-1} \int_{\KK_n} K({\ftrue}^n, f^n) \, \Pi^{\KK_n}(df) - n^{-1} \log \Pi(\KK_n), 
\end{align*}
where the last inequality is by convexity of $K$.  Recall the chain rule for the Kullback--Leibler number between product densities: $K(\ftrue^n, f^n) = nK(\ftrue,f)$.  Therefore, 
\[ n^{-1} K({\ftrue}^n, \fhat^n) \leq \int_{\KK_n} K({\ftrue}, f) \, \Pi^{\KK_n}(df) - n^{-1} \log \Pi(\KK_n). \]
By definition of $\KK_n$, the first term in the upper bound above is $\leq \eps_n^2$, and, by the assumption on $\Pi(\KK_n)$, the second term is $\lesssim \eps_n^2$.  

To complete the proof, we must connect $n^{-1} K(\ftrue^n,\fhat^n)$ and the average in the statement of the proposition.  For this we show that $\fhat^n(Y_1,\ldots,Y_n)$ factors as a product $\prod_{i=1}^n \fhat_{i-1}(Y_i)$ of predictive densities.  The key is 
\begin{align*}
\fhat^n(Y_1,\ldots,Y_n) & = \int \prod_{i=1}^n f(Y_i) \, \Pi(df) \\
& = \int f(Y_n) \prod_{i=1}^{n-1} f(Y_i) \, \Pi(df) \\
& = \int f(Y_n) \, \Pi_{n-1}(df) \cdot \int \prod_{i=1}^{n-1} f(Y_i) \, \Pi(df). 
\end{align*}
The first term in the last line is the expectation of $f(Y_n)$ with respect to the posterior distribution $\Pi_{n-1}$, which is exactly $\fhat_{n-1}(Y_n)$; the second term is the normalizing constant for $\Pi_{n-1}$.  The next step is to apply the same trick to the normalizing constant.  That is, write it as an expectation of $f(Y_{n-1})$ with respect to the posterior distribution $\Pi_{n-2}$ times a new normalizing constant.  This gives 
\[ \fhat^n(Y_1,\ldots,Y_n) = \fhat_{n-1}(Y_n) \fhat_{n-2}(Y_{n-1}) \int \prod_{i=1}^{n-2} f(Y_i) \,\Pi(df). \]
Continuing like this, we find that $\fhat^n(Y_1,\ldots,Y_n)$ factors as $\prod_{i=1}^n \fhat_{i-1}(Y_i)$.  Now, 
\begin{align*}
K(\ftrue^n, \fhat^n) & = \E \log \bigl\{ \ftrue^n(Y_1,\ldots,Y_n)/\fhat^n(Y_1,\ldots,Y_n) \bigr\} \\
& = \sum_{i=1}^n \E \log\{\ftrue(Y_i)/\fhat_{i-1}(Y_i)\} \\
& = \sum_{i=1}^n \E \bigl[ \E \bigl\{ \log \bigl( \ftrue(Y_i) / \fhat_{i-1}(Y_i) \bigr) \mid \Y_{i-1} \bigr\} \bigr].
\end{align*}
The conditional expectation is $K(\ftrue, \fhat_{i-1})$, so $K(\ftrue^n, \fhat^n)$ equals $\sum_{i=1}^n \E\{K(\ftrue, \fhat_{i-1})\}$.  This, together with the $\lesssim \eps_n^2$ bound on $n^{-1}K(\ftrue^n,\fhat^n)$ completes the proof.  
\end{proof}

Observe that the assumption of Proposition~\ref{prop:barron} is implied by $\eps_n$-thickness of $\Pi$ at $\ftrue$.  Also, the Kullback--Leibler divergence can be replaced by the Hellinger distance via the well-known inequality $h \lesssim K$.  That is, $n^{-1} \sum_{i=1}^n \E\{h(\ftrue, \fhat_{i-1})\} \lesssim \eps_n^2$.  

For another perspective, let $\fbar_n = n^{-1} \sum_{i=1}^n \fhat_{i-1}$, an average of predictive densities.  By convexity of $h$, $h(\ftrue, \fbar_n) \leq n^{-1} \sum_{i=1}^n h(\ftrue, \fhat_{i-1})$.  Therefore, Proposition~\ref{prop:barron} says that, if the prior is suitably concentrated around $\ftrue$, then $H(\ftrue, \fbar_n) = O_P(\eps_n)$.  As \citet{walker2003} explains, the prior $\Pi$ would have to be rather strange for this not to imply convergence of the predictive density $\fhat_n$ itself at the same $\eps_n$ rate.  

It is interesting that the predictive densities, and averages thereof, are asymptotically well-behaved with only local properties of the prior \citep{barron1999}.  This is particularly important because posterior convergence rates involve a compromise between local and global global properties.  For example, overall posterior convergence rates are determined by $\max\{\eps_n,\eps_n'\}$, where $\eps_n'$ gives a global characterization of the complexity of the model.  In many cases, $\eps_n'$ is bigger than $\eps_n$, slowing down the overall posterior convergence rate; see \citet{ghosalvaart2001}.  Proposition~\ref{prop:barron} requires no global conditions, so there is nothing slowing down convergence.  So, although Proposition~\ref{prop:barron} is a weaker result than full posterior convergence, it does provide some nice intuition.

\section{Convergence rates for the posterior}
\label{S:posterior}

\subsection{Review of existing results}
\label{SS:main}

There are essentially two kinds of theorems: the first kind makes assumptions on the ``size'' of the model $\FF$, and the second kind makes assumptions on how the prior probabilities are spread across $\FF$.  Before proving the convergence rate theorem, we discuss these conditions in more detail.  In particular, we show in Proposition~\ref{prop:equivalent} that the latter assumption is stronger than the former.  Throughout this discussion, we silently assume that the prior $\Pi$ is $\eps_n$-thick at $\ftrue$, with constant $C$ given in \eqref{eq:thick}.  

The first set of sufficient conditions are like those in \citet{ggv2000}.  Their concern is the existence of a suitable high mass, low entropy sieve.  Let $(\FF_n)$ be an increasing sequence of measurable subsets of $\FF$.  The idea is that the sieve $\FF_n$ will be large enough to contain all the reasonable $f$'s, but also small enough to be covered by a relatively small number of Hellinger balls that are each easier to work with.  Let $N(\eps_n, \FF_n, H)$ denote the Hellinger $\eps_n$-covering number of $\FF_n$, that is, the minimum number of Hellinger balls of radius $\eps_n$ needed to cover $\FF_n$.  Theorem~2.1 of \citet{ggv2000} assumes that the following condition (``S'' for sieve) holds:

\begin{condS}
There exists a sieve $\FF_n \subset \FF$ such that, for sufficiently large $n$, 
\begin{itemize}
\item[(a)] $\Pi(\FF_n^c) \lesssim e^{-rn\eps_n^2}$, where $r > C+1$, and 
\vspace{-2mm}
\item[(b)] $\log N(\eps_n, \FF_n, H) \lesssim n\eps_n^2$.  
\end{itemize}
\end{condS}

Part (a) ensures that $\Pi$ assigns most of its probability to a large subset of $\FF$, and Part (b) guarantees that this ``large'' subset of $\FF$ is not too large.  As opposed to prior thickness, which is a local property, S(a) and S(b) are global properties.  These conditions have, along with prior thickness, been verified for a variety of important priors, including Dirichlet process mixtures. 

Despite the nice geometric intuition of Condition~S, identifying a suitable sieve is sometimes difficult in practice.  Fortunately, there is an alternative sufficient condition (``P'' for prior), due to \citet{walker2007}, which can be easier to work with.

\begin{condP}
Let $B_n = \{f: H(\ftrue,f) > \eps_n\}$.  For $(A_{n,j})_{j \geq 1}$ a covering of $B_n$ by Hellinger balls of radius $\delta_n < \eps_n$, and some constants $c > 0$ and $\beta > 1$, the following holds:
\[ e^{-cn\eps_n^2} \sum_{j \geq 1} \Pi(A_{n,j})^{1/\beta} \to 0. \]
\end{condP}

The case $\beta=2$ was considered in \citet{walker2007}.  This condition ensures that the prior is sufficiently concentrated near $\ftrue$.  That is, if the prior is too spread out, then those covering sets could get large enough posterior probability that the summation above is of exponential order.  An advantage of Condition~P is that it is directly related to the Bayesian problem, through the prior probabilities, and often these prior probabilities have a nice form.  And by allowing $\beta \approx 1$, the Condition~P here is weaker than that in \citet{walker2007} with $\beta=2$.  However, in applications, the sets $A_{n,j}$ are typically assigned exponentially small prior probability so it is not clear if $\beta \in (1,2)$ is easier to verify or leads to any improvement in the rate of convergence.  

We claim that Condition~S is, in a certain sense, more fundamental than Condition~P, despite the fact that the latter is often easier in practice.  To justify this claim, we prove that Condition~S is actually weaker than Condition~P.  This connection between the two sets of conditions is implicit in Theorem~5 of \citet{ghosalvaart2007}.  An analogous result is given in \citet[][Theorem~4.4]{choi.ramamoorthi.2008} in the context of posterior consistency.  

\begin{prop}
\label{prop:equivalent}
Condition~P implies Condition~S.    
\end{prop}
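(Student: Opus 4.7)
The plan is to build the sieve $\FF_n$ directly from the covering of $B_n$ supplied by Condition~P, keeping only those balls that carry enough prior mass. Concretely, choose a large constant $M > 0$ (to be tuned at the end), and set
\[
\FF_n \;=\; \{f: H(\ftrue,f) \leq \eps_n\} \;\cup\; \bigcup_{j \in J_n} A_{n,j}, \qquad J_n = \{j \geq 1 : \Pi(A_{n,j}) \geq e^{-Mn\eps_n^2}\}.
\]
Since $B_n^c$ is itself a single Hellinger ball of radius $\eps_n$ around $\ftrue$ and $(A_{n,j})$ covers $B_n$, the complement $\FF_n^c$ lies in the union of the ``small'' covering balls, i.e.\ those with $\Pi(A_{n,j}) < e^{-Mn\eps_n^2}$.

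For Part~(a) of Condition~S, I would write $\Pi(A_{n,j}) = \Pi(A_{n,j})^{1/\beta} \cdot \Pi(A_{n,j))^{1-1/\beta}$ and, on the small balls, bound the second factor by $e^{-M(1-1/\beta)n\eps_n^2}$. Summing over $j \notin J_n$ and invoking Condition~P (which gives $\sum_j \Pi(A_{n,j})^{1/\beta} \lesssim e^{cn\eps_n^2}$) yields
\[
\Pi(\FF_n^c) \;\leq\; e^{-M(1-1/\beta)n\eps_n^2} \sum_{j \geq 1} \Pi(A_{n,j})^{1/\beta} \;\lesssim\; e^{-\{M(1 - 1/\beta) - c\} n \eps_n^2}.
\]
Since $\beta > 1$, choosing $M$ large enough makes the exponent exceed any prescribed $r > C+1$, which gives S(a).

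For Part~(b), note that each $A_{n,j}$ has Hellinger radius $\delta_n < \eps_n$, so each is contained in a Hellinger ball of radius $\eps_n$ (with the same center), and the extra piece $\{f: H(\ftrue,f) \leq \eps_n\}$ is already one such ball. Therefore $N(\eps_n, \FF_n, H) \leq 1 + |J_n|$. To bound $|J_n|$, I observe that each $j \in J_n$ contributes $\Pi(A_{n,j})^{1/\beta} \geq e^{-Mn\eps_n^2/\beta}$, so
\[
|J_n| \;\leq\; e^{Mn\eps_n^2/\beta} \sum_{j \geq 1} \Pi(A_{n,j})^{1/\beta} \;\lesssim\; e^{(M/\beta + c) n\eps_n^2},
\]
giving $\log N(\eps_n,\FF_n,H) \lesssim n\eps_n^2$, which is S(b).

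The step that requires the most care is the bookkeeping with the two constants: the same threshold $M$ must simultaneously make $\Pi(\FF_n^c)$ small enough (requiring $M$ large) and keep $|J_n|$ subexponential in $n\eps_n^2$ (requiring $M$ not too large relative to the hidden constants). The inequality $\beta > 1$ is exactly what decouples these demands -- the exponent $M(1-1/\beta)$ in (a) and $M/\beta$ in (b) can both be tuned by choosing $M$ first, independent of the covering, so the argument goes through. Allowing $\beta$ arbitrarily close to $1$ is the only place where the weakening of Walker's original $\beta = 2$ would force $M$ to be correspondingly larger, but since $r$ and $C$ are fixed this presents no obstacle.
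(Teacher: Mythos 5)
Your proof is correct, and it takes a closely related but genuinely different route from the paper's.  The paper first orders the covering balls so that $\Pi(A_{n,1}) \geq \Pi(A_{n,2}) \geq \cdots$, sets $\FF_n$ to be the union of the top $J_n$ balls with $J_n = \min\{j: j^{\beta-1} \geq S_n^\beta e^{rn\eps_n^2}\}$ (where $S_n = \sum_j \Pi(A_{n,j})^{1/\beta}$), and bounds $\Pi(\FF_n^c)$ via the chain $\Pi(A_{n,j}) \leq S_n^\beta/j^\beta$ followed by the tail estimate $\sum_{j>J_n} j^{-\beta} \lesssim J_n^{-(\beta-1)}$.  You instead threshold the balls directly at prior mass $e^{-Mn\eps_n^2}$ and peel off the factor $\Pi(A_{n,j})^{1-1/\beta}$, which avoids the tail-sum calculation entirely; both arguments exploit the same gap $\beta > 1$, so they buy the same thing, but your device is a bit cleaner.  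One feature of your version worth highlighting: you explicitly put the central ball $\{f: H(\ftrue,f)\leq\eps_n\}$ into the sieve.  This is actually necessary --- Condition~P only requires that the $A_{n,j}$ cover $B_n$, so with the paper's displayed sieve $\FF_n = \bigcup_{j\leq J_n} A_{n,j}$ the complement $\FF_n^c$ can contain points of $B_n^c$ not belonging to any $A_{n,j}$, and $\Pi(B_n^c)$ is not exponentially small; your inclusion of the central ball closes this small gap while leaving the covering-number bound unchanged (one extra $\eps_n$-ball).  Finally, as you note, the only constraint on $M$ is $M(1-1/\beta) > r+c$, and once $M$ is fixed the bound $\log(1+|J_n|) \lesssim (M/\beta + c)n\eps_n^2$ is of the required order with an implicit constant depending only on $M,\beta,c$, so there is genuinely no tension between parts (a) and (b).
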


\begin{proof}
Without loss of generality, suppose that, for each $n$, the sets $A_{n,j}$ are ordered such that $\Pi(A_{n,1}) \geq \Pi(A_{n,2}) \geq \cdots$.  Also, let $S_n = \sum_j \Pi(A_{n,j})^{1/\beta}$, which can be expressed as $S_n = e^{cn\eps_n^2 - v(n)}$ for some $v(n) > 0$ such that $v(n) \to \infty$.  Take $r > C+1$, and set 
\[ \FF_n = \bigcup_{j=1}^{J_n} A_{n,j}, \quad \text{where} \quad J_n = \min\{j \in \NN: j^{\beta-1} \geq S_n^\beta e^{rn\eps_n^2}\}. \]
Clearly, $\log N(\eps_n, \FF_n, H) = \log J_n \leq (\frac{r+\beta c}{\beta-1})n\eps_n^2 \lesssim n\eps_n^2$, so $\FF_n$ satisfies Condition~S(b).  Next, the special ordering of $\Pi(A_{n,j})$ implies that $J \Pi(A_{n,J})^{1/\beta} \leq \sum_{j=1}^J \Pi(A_{n,j})^{1/\beta} \leq S_n$ for any $J$, which in turn implies that $\Pi(A_{n,J}) \leq S_n^\beta/J^\beta$.  Therefore, 
\[ \Pi(\FF_n^c) = \Pi\Bigl(\bigcup_{j > J_n} A_{n,j} \Bigr) \leq \sum_{j > J_n} \Pi(A_{n,j}) \leq \sum_{j > J_n} \frac{S_n^\beta}{j^\beta} \lesssim \frac{S_n^\beta}{J_n^{\beta-1}} \leq e^{-rn\eps_n^2}, \]
so Condition~S(a) holds as well.  
\end{proof}

\begin{thm}
\label{thm:rates}
Suppose $\Pi$ is $\eps_n$-thick at $\ftrue$.  If either Condition~S or Condition~P holds, then $\Pi_n(\{f: H(\ftrue, f) \gtrsim \eps_n\}) \to 0$ in probability.  
\end{thm}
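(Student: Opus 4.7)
The plan is to reduce to Condition~S via Proposition~\ref{prop:equivalent}, and then combine a sieve--covering decomposition with a Walker-style \emph{fundamental lemma} that bounds the restricted numerator on convex sets separated from $\ftrue$. Fix a constant $M > 2$ (to be chosen large) and set $B_n = \{f: H(\ftrue, f) > M\eps_n\}$. Split
\[
\Pi_n(B_n) \leq \Pi_n(\FF_n^c) + \Pi_n(B_n \cap \FF_n).
\]
The first term vanishes in probability by Condition~S(a) together with Lemma~\ref{lem:prior.to.posterior}, since $\Pi(\FF_n^c) \lesssim e^{-rn\eps_n^2}$ for some $r > C+1$. The substance lies in the second term.

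For that, invoke Condition~S(b) to obtain a Hellinger $\eps_n$-cover $A_{n,1},\ldots,A_{n,N_n}$ of $\FF_n$ with $\log N_n \lesssim n\eps_n^2$, and discard any ball missing $B_n$. Each surviving $A_{n,j}$ is a Hellinger ball, hence convex by the example in Section~\ref{SS:convexity}; and by the triangle inequality $H(\ftrue, f) > (M-2)\eps_n$ for every $f \in A_{n,j}$, so $A_{n,j}$ is $\delta_n$-separated from $\ftrue$ in $h=H^2/2$, with $\delta_n = (M-2)^2\eps_n^2/2$. Since Lemma~\ref{lem:thick} provides $I_n \geq e^{-c_0 n\eps_n^2}$ with probability tending to one for any $c_0 > C+1$, it suffices to show that, for some $c > c_0$,
\[
\sum_{j} \int_{A_{n,j}} R_n(f)\,\Pi(df) \leq e^{-c n\eps_n^2}
\]
on an event of probability tending to one.

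The key auxiliary lemma I would prove says: for any convex $A$ that is $\delta$-separated from $\ftrue$ in $h$,
\[
\E\Bigl[\Bigl(\int_A R_n(f)\,\Pi(df)\Bigr)^{1/2}\Bigr] \leq (1-\delta)^n \Pi(A)^{1/2}.
\]
The proof mirrors Proposition~\ref{prop:barron}: the restricted marginal $\int_A R_n\,d\Pi$ factors, via the restricted prior $\Pi^A$, as $\Pi(A)\prod_{i=1}^n \fhat_{i-1}^A(Y_i)/\ftrue(Y_i)$, where $\fhat_{i-1}^A(y)=\int f(y)\,\Pi_{i-1}^A(df)$; convexity of $A$ forces $\fhat_{i-1}^A \in A$ at every step, hence $h(\ftrue,\fhat_{i-1}^A)\geq\delta$, and the identity $\E[(g/\ftrue)^{1/2}]=1-h(\ftrue,g)$, applied via the tower property to the telescoping product, delivers the claim. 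Combining it with the elementary inequality $\sum_j a_j \leq (\sum_j a_j^{1/2})^2$, Markov's inequality, the bound $(1-\delta_n)^n \leq e^{-(M-2)^2 n\eps_n^2/2}$, and $\sum_j \Pi(A_{n,j})^{1/2} \leq N_n^{1/2}$ (Cauchy--Schwarz against $\sum_j \Pi(A_{n,j}) \leq 1$), the target display above follows once $M$ is chosen large enough that $(M-2)^2$ exceeds a specific constant depending on $c_0$ and the implicit constant in Condition~S(b).

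The main obstacle is this auxiliary lemma: the delicate point is verifying that the convexity of each Hellinger ball $A_{n,j}$ really does trap the restricted one-step predictive density $\fhat_{i-1}^A$ inside $A_{n,j}$ at every step, so that the separation bound $h(\ftrue,\fhat_{i-1}^A)\geq\delta_n$ propagates through the entire product, and that the restricted-prior factorization interacts cleanly with conditioning on $Y_1,\ldots,Y_{i-1}$. The remaining ingredients---matching constants against Lemma~\ref{lem:thick} and summing the resulting Jensen bound across the $e^{n\eps_n^2}$-size cover from Condition~S(b)---are straightforward bookkeeping.
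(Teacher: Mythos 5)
Your proposal is correct and mirrors the paper's own argument almost step for step: the same reduction of Condition~P to Condition~S via Proposition~\ref{prop:equivalent}, the same split $\Pi_n(B_n)\leq\Pi_n(\FF_n^c)+\Pi_n(B_n\cap\FF_n)$, a Hellinger-ball cover whose members are convex and separated from $\ftrue$, and the identical telescoping predictive-density argument for the key bound $\E(L_{n,n}^{1/2})\leq\Pi(A_n)^{1/2}e^{-dn\eps_n^2}$, which is exactly Lemma~\ref{lem:rates} (the ``delicate point'' you flag is handled there by Definition~\ref{def:convex}, which says precisely that $\fhat_{i-1}^{A}$, being a $\Pi_{i-1}^A$-mixture, stays in the convex set $A$). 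The only deviations are cosmetic choices of covering radius and constants, plus a small inaccuracy in claiming $\sum_j\Pi(A_{n,j})\leq 1$ (the balls may overlap), which is harmless because the crude bound $\sum_j\Pi(A_{n,j})^{1/2}\leq N_n$ already suffices once $M$ is chosen large.
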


\begin{proof}
The part involving Condition~P follows from Proposition~\ref{prop:equivalent} and the part involving Condition~S.  The part involving Condition~S is proved in Section~\ref{SS:proof1}.
\end{proof}

Although the result in Theorem~\ref{thm:rates} is known, the proof that follows will highlight the importance of predictive densities in the study of posterior convergence rates.  The basic idea is that sets $A_n$ in $\FF$ such that the predictive densities, restricted to $A_n$, are not too close to $\ftrue$ will have vanishing posterior probability.  These predictive densities are fundamental quantities in the Bayesian context, so the proof presented herein perhaps has a better Bayesian interpretation compared to those arguments based on, say, existence of a consistent sequence of tests.

\subsection{A preliminary result}

Recall that $\Pi_k$ denotes the posterior distribution of $f$, given $Y_1,\ldots,Y_k$.  For a set $A_n$, we write $\Pi_k^{A_n}$ for that same posterior distribution, but restricted and normalized to $A_n$.  Then we can define a corresponding predictive density:
\[ \fhat_{i-1}^{A_n}(Y_i) = \int_{A_n} f(Y_i) \,\Pi_{i-1}^{A_n}(df), \quad i=1,\ldots,n. \]
For a sequence of sets $(A_n)$, let $L_{n,i} = \int_{A_n} R_i(f) \,\Pi(df)$ be the numerator of $\Pi_i(A_n)$ in \eqref{eq:post2}, $i=1,\ldots,n$.  Note that $L_{n,0} = \Pi(A_n)$.  It is easy to check that 
\[ L_{n,i} / L_{n,i-1} = \fhat_{i-1}^{A_n}(Y_i) / \ftrue(Y_i), \quad i=1,\ldots,n. \]
For $\Y_{i-1}$ the $\sigma$-algebra generated by $Y_1,\ldots,Y_{i-1}$, it follows that 
\begin{equation}
\label{eq:conditional.expectation}
\E\{ (L_{n,i} / L_{n,i-1})^{1/2} \mid \Y_{i-1}\} = 1-h(\ftrue, \fhat_{i-1}^{A_n}). 
\end{equation}
The next result, akin to Proposition~1 in \citet{walker2007}, provides a convenient fixed-$n$ bound the expected value of $L_{n,n}^{1/2}$ for suitable sets $A_n$.  

\begin{lem}
\label{lem:rates}
Let $\Pi$ be $\eps_n$-thick at $\ftrue$, with $C$ as in \eqref{eq:thick}.  If $A_n$ is convex and $d\eps_n^2$-separated from $\ftrue$, with $d > C+1$, then $\E(L_{n,n}^{1/2}) \leq \Pi(A_n)^{1/2} e^{-dn\eps_n^2}$.   
\end{lem}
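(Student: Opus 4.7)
The plan is to exploit the multiplicative structure of $L_{n,n}$. Since $L_{n,0}=\Pi(A_n)$ and, by the identity noted just before the lemma, $L_{n,i}/L_{n,i-1}=\fhat_{i-1}^{A_n}(Y_i)/\ftrue(Y_i)$, one has the telescoping factorization
\[ L_{n,n}^{1/2} \;=\; \Pi(A_n)^{1/2}\,\prod_{i=1}^n \bigl(L_{n,i}/L_{n,i-1}\bigr)^{1/2}, \]
so the result reduces to showing that $\E\bigl[\prod_{i=1}^n (L_{n,i}/L_{n,i-1})^{1/2}\bigr]\le e^{-dn\eps_n^2}$. I would do this by iterated conditioning along the natural filtration $(\Y_i)$: conditioning on $\Y_{n-1}$ and invoking \eqref{eq:conditional.expectation} replaces the last factor by $1-h(\ftrue,\fhat_{n-1}^{A_n})$, leaving an expectation of the remaining $n-1$ factors multiplied by this $\Y_{n-1}$-measurable quantity.

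The key step is to upper bound $1-h(\ftrue,\fhat_{i-1}^{A_n})$ by $e^{-d\eps_n^2}$ almost surely. This is where convexity and separation combine: the restricted posterior $\Pi_{i-1}^{A_n}$ is a probability measure supported on $A_n$, so by Definition~\ref{def:convex} its mean $\fhat_{i-1}^{A_n}=\int f\,\Pi_{i-1}^{A_n}(df)$ lies in $A_n$; the $d\eps_n^2$-separation hypothesis then gives $h(\ftrue,\fhat_{i-1}^{A_n})>d\eps_n^2$, and the elementary inequality $1-x\le e^{-x}$ delivers the bound. Since each factor $(L_{n,i}/L_{n,i-1})^{1/2}$ is nonnegative, one can legitimately pull this deterministic bound out of the expectation at every peeling step.

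Iterating the conditioning through $\Y_{n-2},\Y_{n-3},\ldots,\Y_0$ produces $n$ factors of $e^{-d\eps_n^2}$, yielding $\E\bigl[\prod_{i=1}^n (L_{n,i}/L_{n,i-1})^{1/2}\bigr]\le e^{-dn\eps_n^2}$ and, after multiplying by $\Pi(A_n)^{1/2}$, the claim. The main subtle point is the convexity step: without it one could not guarantee that the restricted predictive density itself lies in $A_n$, and the separation assumption would give no control over $h(\ftrue,\fhat_{i-1}^{A_n})$. Once convexity is in place, the rest is bookkeeping via the tower property. I also note that neither $\eps_n$-thickness of $\Pi$ nor the constraint $d>C+1$ is actually used in this argument; these hypotheses are included for the lemma's intended role in Theorem~\ref{thm:rates}, where they interact with Lemma~\ref{lem:prior.to.posterior} to convert the bound on $\E(L_{n,n}^{1/2})$ into vanishing posterior probability.
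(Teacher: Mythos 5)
Your proof is correct and follows essentially the same route as the paper: telescope $L_{n,n}/L_{n,0}$ into a product of one-step ratios, peel off conditional expectations via \eqref{eq:conditional.expectation}, use convexity to place $\fhat_{i-1}^{A_n}$ in $A_n$ and separation to bound $1-h(\ftrue,\fhat_{i-1}^{A_n})$, then finish with $1-x\le e^{-x}$. If anything your explicit backward peeling through $\Y_{n-1},\ldots,\Y_0$ is a slightly cleaner rendering of the paper's somewhat compressed display, and your remark that the thickness hypothesis and the constraint $d>C+1$ play no role in the argument matches the paper's own comment immediately following the lemma.
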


\begin{proof}
Start with the ``telescoping product''
\[ \Bigl( \frac{L_{n,n}}{\Pi(A_n)} \Bigr)^{1/2} = \Bigl( \frac{L_{n,n}}{L_{n,0}} \Bigr)^{1/2} = \prod_{i=1}^n \Bigl( \frac{L_{n,i}}{L_{n,i-1}} \Bigr)^{1/2}. \]
Taking expectation of both sides, conditioning on $\Y_{i-1}$ and using \eqref{eq:conditional.expectation}, gives 
\[ \frac{\E(L_{n,n}^{1/2})}{\Pi(A_n)^{1/2}}  = \E \Bigl[ \prod_{i=1}^n \E \Bigl\{\Bigl( \frac{L_{n,i}}{L_{n,i-1}} \Bigr)^{1/2} \Bigmid \Y_{i-1} \Bigr\} \Bigr] = \E\Bigl[ \prod_{i=1}^n \{1-h(\ftrue,\fhat_{i-1}^{A_n})\} \Bigr]. \]
The assumed convexity of $A_n$ and its separation from $\ftrue$ together imply that 
\[ \frac{\E(L_{n,n}^{1/2})}{\Pi(A_n)^{1/2}} \leq (1 - d\eps_n^2)^n \leq e^{-dn\eps_n^2}. \]
Multiplying both sides by $\Pi(A_n)^{1/2}$ completes the proof.  
\end{proof}

The thickness assumption in Lemma~\ref{lem:rates} is not necessary, but it helps to set the notation for its primary application.  This use of ratios of predictive densities is not new; see \citet{walker2004a} and \citet{walker2007}.  In fact, Lemma~\ref{lem:rates} is similar to the main conclusion in Proposition~1 of \citet{walker2007}, although the proof is a bit different.

\subsection{Proof of Theorem~\ref{thm:rates}}
\label{SS:proof1}

For $M$ a sufficiently large constant to be determined, define $B_n = \{f: H(\ftrue,f) > M\eps_n\}$.  For the given $\FF_n$, it is clear that $\Pi_n(B_n) \leq \Pi_n(\FF_n^c) + \Pi_n(B_n \cap \FF_n)$.  From Condition~S(a) and Lemma~\ref{lem:prior.to.posterior}, we conclude that $\Pi_n(\FF_n^c) \to 0$ in probability.  We now turn attention to the second term, $\Pi_n(B_n \cap \FF_n)$. 

Choose a covering $B_n \cap \FF_n \subseteq \bigcup_{j=1}^{J_n} A_{nj}$, where each $A_{nj}$ is a Hellinger ball of radius $M\eps_n/2$ with center in $B_n$.  By Condition~S(b), $J_n = e^{Rn\eps_n^2}$ for some $R > 0$.  Now, since probabilities are $\leq 1$, we have
\[ \Pi_n(B_n \cap \FF_n) \leq \sum_{j=1}^{J_n} \Pi_n(A_{nj}) \leq \sum_{j=1}^{J_n} \Pi_n(A_{nj})^{1/2} = \frac{1}{I_n^{1/2}} \sum_{j=1}^{J_n} L_{n,nj}^{1/2}, \]
where $I_n=\int R_n(f)\,\Pi(df)$ is the denominator of $\Pi_n(A_{nj})$, which is independent of $j$, and $L_{n,nj} = \int_{A_{nj}} R_n(f) \,\Pi(df)$ is the numerator.  From the triangle inequality argument in the example following Definition~\ref{def:separate}, we know that $\ftrue$ and $A_{nj}$ are $(M^2\eps_n^2/8)$-separated for all $j=1,\ldots,J_n$.  So, provided that $M$ is sufficiently large, we may apply Lemma~\ref{lem:rates} to bound the expectation of the sum of $L_{n,nj}^{1/2}$.  Indeed, 
\[ \sum_{j=1}^{J_n} \E(L_{n,nj}^{1/2}) \leq \sum_{j=1}^{J_n} \Pi(A_{nj})^{1/2} e^{-(M^2/8)n\eps_n^2} \leq J_n e^{-(M^2/8) n \eps_n^2} = e^{-(M^2/4-2R)n\eps_n^2/2}. \]
If we choose $M$ such that $M^2 > 4[(C+1) + 2R]$, then (application of Lemma~\ref{lem:rates} is valid and) the upper bound above vanishes as $n \to \infty$.  Now let $S_n = \sum_{j=1}^{J_n} L_{n,nj}^{1/2}$, and pick $c \in (C+1, M^2/4-2R)$.  By Markov's inequality we have 
\[ \prob(e^{cn\eps_n^2/2} S_n > \eta) \lesssim e^{-M^2/4-2R-c)n\eps_n^2/2} \to 0, \quad \forall \; \eta > 0. \]
Also, by Lemma~\ref{lem:thick}, we have $\prob(I_n \leq e^{-cn\eps_n^2}) \to 0$.  A total-probability argument like in the proof of Lemma~\ref{lem:prior.to.posterior} gives
\begin{align*}
\prob\{\Pi_n(B_n \cap \FF_n) > \eta\} & \leq \prob(S_n/I_n^{1/2} > \eta) \\
& = \prob(S_n/I_n^{1/2} > \eta, I_n \leq e^{-cn\eps_n^2}) + \prob(S_n/I_n^{1/2} > \eta, I_n > e^{-cn\eps_n^2}) \\
& \leq \prob(I_n \leq e^{-cn\eps_n^2}) + \prob(S_n/I_n^{1/2} > \eta, I_n > e^{-cn\eps_n^2}) \\
& \leq \prob(I_n \leq e^{-cn\eps_n^2}) + \prob(e^{cn\eps_n^2/2}S_n > \eta).
\end{align*}
Since both of these terms vanish, we conclude that $\Pi_n(B_n) \leq \Pi_n(\FF_n^c) + \Pi_n(B_n \cap \FF_n) \to 0$ in probability, i.e., $\Pi_n(\{f: H(\ftrue,f) > M\eps_n\}) \to 0$ in probability.

\section{Extension to mis-specified iid models}
\label{S:miss}

\subsection{Notation and setup}

It can happen that the true density $\ftrue$ lies outside the support of prior.  In such cases, the posterior cannot concentrate around $\ftrue$.  However, the posterior can exhibit concentration properties around a different point in $\FF$.  Specifically, take $\fbest$ to be the $f \in \FF$ that minimizes the Kullback--Leibler divergence, i.e., 
\begin{equation}
\label{eq:kl.min}
K^\star(\fbest, f) := K(\ftrue,f) - K(\ftrue, \fbest) \geq 0, \quad \forall \; f \in \FF.  
\end{equation}
An analysis of posterior concentration is presented in \citet{kleijn}.  They show that, under certain conditions, the posterior distribution concentrates around the point $\fbest$.  Indeed, if 
\[ R_n(f) = \prod_{i=1}^n f(Y_i)/\fbest(Y_i), \]
then the posterior is given by 
\begin{equation}
\label{eq:post.miss}
\Pi_n(A) = \Pi(A \mid Y_1,\ldots,Y_n) = \frac{\int_A R_n(f) \,\Pi(df)}{\int_\FF R_n(f) \, \Pi(df)}. 
\end{equation}
Then the goal is to show that $\Pi_n(B_n^c) \to 0$, where $B_n$ is a shrinking neighborhood of $\fbest$.  Here we give an analysis based primarily on predictive densities.  First, we recall/revise some of our previous notions.  

\begin{description}
\item[\it Prior thickness.] Let $V^\star(\fbest,f) = \int \{\log(\fbest/f)\}^2 \ftrue \,d\mu$.  Then we have the following analogue of Definition~\ref{def:thick}, i.e., the prior $\Pi$ is $\eps_n$-thick at $\fbest$ if, for some constant $C > 0$, 
\begin{equation}
\label{eq:thick.miss}
\Pi(\{f: K^\star(\fbest,f) \leq \eps_n^2, \, V^\star(\fbest,f) \leq \eps_n^2\}) \geq e^{-Cn\eps_n^2}. 
\end{equation}
It follows from Lemma~7.1 of \citet{kleijn} that the result of Lemma~\ref{lem:thick} above holds in the mis-specified case, i.e., 
\begin{equation}
\label{eq:thick.lemma.miss}
\text{$\prob(I_n \leq e^{-cn\eps_n^2}) \to 0$ for any $c > C+1$}, 
\end{equation}
where $I_n = \int R_n(f) \,\Pi(df)$ is the denominator in \eqref{eq:post.miss}.  

%\item[\it Convexity.] As before, a set $A \subseteq \FF$ is convex if, for any probability measure $\Phi$ supported on $A$, the expectation $f_\Phi = \int f \, \Phi(df)$ belongs to $A$.  

\item[\it Separation.] For a distance on $\FF$, consider a weighted Hellinger distance $H^\star$, whose square is given by $H^\star(f,f')^2 = \int (f^{1/2}-{f'}^{1/2})^2 (\ftrue/ \fbest) \,d\mu$.  In the well-specified case, i.e., $\fbest=\ftrue$, this is the usual Hellinger distance.  Since $\int (f/\fbest) \ftrue \,d\mu \leq 1$ for all $f \in \FF$ \citep[][Lemma~2.3]{kleijn}, we have 
\begin{align*}
H^\star(\fbest,f)^2 & = \int \Bigl\{ \Bigl(\frac{f}{\fbest} \Bigr)^{1/2}-1\Bigr\}^2 \ftrue \,d\mu \\
& = 1 + \int \frac{f}{\fbest} \ftrue \,d\mu - 2 \int \Bigl( \frac{f}{\fbest} \Bigr)^{1/2} \ftrue \,d\mu \\
& \leq 2 - 2 \int \Bigl( \frac{f}{\fbest} \Bigr)^{1/2} \ftrue \,d\mu.
\end{align*}
Write $h^\star(\fbest,f) = 1-\int (f/\fbest)^{1/2}\ftrue \,d\mu$, so that ${H^\star}^2/2 \leq h^\star$.  Now we say that $\fbest$ and a set $A$ are $\delta$-separated (with respect to $h^\star$) if $h^\star(\fbest,f) > \delta$ for all $f \in A$.  
\end{description}

\subsection{Convergence rate results}

First, we extend Proposition~\ref{prop:barron} to the mis-specified case.  The only noticeable change is the use of the Kullback--Leibler contrast $K^\star(\fbest,f)$ in \eqref{eq:kl.min} instead of $K(\ftrue,f)$.  

\begin{prop}
\label{prop:barron.miss}
For a sequence $(\eps_n)$, with $\eps_n \to 0$, let $\KK_n=\{f: K^\star(\fbest,f) \leq \eps_n^2\}$.  If $\log \Pi(\KK_n) \gtrsim -n\eps_n^2$, then $n^{-1} \sum_{i=1}^n \E\{K^\star(\fbest,\fhat_{i-1})\} \lesssim \eps_n^2$.  
\end{prop}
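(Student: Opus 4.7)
The plan is to mirror the proof of Proposition~\ref{prop:barron}, substituting the Kullback--Leibler contrast $K^\star(\fbest,\cdot)$ for the divergence $K(\ftrue,\cdot)$ and the reference density $\fbest$ for $\ftrue$ wherever it appears in the numerator of a log-ratio. The crucial observation is that $K^\star(\fbest,f) = K(\ftrue,f) - K(\ftrue,\fbest)$ differs from $K(\ftrue,f)$ by a quantity that does not depend on $f$, so the two ingredients of the well-specified proof---convexity in the second argument and the product-form chain rule---transfer to $K^\star(\fbest,\cdot)$ essentially for free.

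Concretely, I would introduce $\ftrue^n$, $\fbest^n$, and $\fhat^n = \int f^n\,\Pi(df)$ exactly as before, together with the lower bound $\fhat^n \geq \Pi(\KK_n)\,\fhat^{n,\KK_n}$, where $\fhat^{n,\KK_n} = \int f^n\,\Pi^{\KK_n}(df)$. Writing $K^\star(\fbest^n,\fhat^n)$ for $\int \log(\fbest^n/\fhat^n)\,\ftrue^n\,d\mu^n$, this bound yields
\[ K^\star(\fbest^n,\fhat^n) \leq K^\star(\fbest^n,\fhat^{n,\KK_n}) - \log \Pi(\KK_n). \]
Convexity of $g \mapsto K^\star(\fbest,g)$, which follows from Jensen's inequality exactly as for the usual KL divergence, combined with the chain rule $K^\star(\fbest^n,f^n) = n K^\star(\fbest,f)$ (immediate from $\log(\fbest^n/f^n) = \sum_i \log(\fbest(Y_i)/f(Y_i))$ and the iid structure under $\ftrue$), then gives
\[ K^\star(\fbest^n,\fhat^{n,\KK_n}) \leq \int_{\KK_n} K^\star(\fbest^n,f^n)\,\Pi^{\KK_n}(df) = n \int_{\KK_n} K^\star(\fbest,f)\,\Pi^{\KK_n}(df) \leq n\eps_n^2. \]
Together with the hypothesis $-\log \Pi(\KK_n) \lesssim n\eps_n^2$, this delivers $n^{-1} K^\star(\fbest^n,\fhat^n) \lesssim \eps_n^2$.

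To close, I would reuse verbatim the factorization $\fhat^n(Y_1,\ldots,Y_n) = \prod_{i=1}^n \fhat_{i-1}(Y_i)$ from Proposition~\ref{prop:barron}, since that derivation only manipulates posterior expectations on the Bayesian side and is indifferent to whether the reference density in the log-ratio is $\ftrue$ or $\fbest$. Taking $\ftrue^n$-expectation of $\log(\fbest^n/\fhat^n)$ and iterating the tower property then yields $K^\star(\fbest^n,\fhat^n) = \sum_{i=1}^n \E\{K^\star(\fbest,\fhat_{i-1})\}$, and dividing by $n$ finishes the argument. The only real subtlety is confirming the convexity and chain-rule properties used above, but each reduces immediately to its well-known $K$ counterpart upon subtracting the $f$-free constant $K(\ftrue,\fbest)$, so no misspecification-specific machinery is required.
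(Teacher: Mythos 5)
Your proposal is correct and follows exactly the route the paper intends: the paper's own proof of Proposition~\ref{prop:barron.miss} is the single line ``Similar to that of Proposition~\ref{prop:barron}; use convexity of $K^\star$,'' and your argument is a faithful, correctly-reasoned expansion of that sketch, including the key observations that convexity and the chain rule for $K^\star(\fbest,\cdot)$ reduce to their $K(\ftrue,\cdot)$ counterparts after subtracting the $f$-free constant $K(\ftrue,\fbest)$, and that the predictive-density factorization of $\fhat^n$ is unchanged by the reference density in the log-ratio.
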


\begin{proof}
Similar to that of Proposition~\ref{prop:barron}; use convexity of $K^\star$.  
\end{proof}

As before, if $\fbar_n$ is the average predictive density, $\fbar_n = n^{-1}\sum_{i=1}^n \fhat_{i-1}$, then Proposition~\ref{prop:barron.miss} and convexity of the Kullback--Leibler contrast implies $K^\star(\fbest,\fbar_n) = O_P(\eps_n^2)$.  Also, the condition on $\Pi(\KK_n)$ is implied by prior thickness \eqref{eq:thick.miss}.  Therefore, just like in the well-specified case, with only a local thickness condition on the prior, the predictive densities, or averages thereof, converge to the ``best'' density $\fbest$ in the model $\FF$.  

Towards a posterior concentration result, given sets $(A_n)$ in $\FF$, let $L_{n,i}$ be the numerator of $\Pi_i(A_n)$ in \eqref{eq:post.miss}; note, $L_{n,0}=\Pi(A_n)$.  Then, as before, it is easy to check that $L_{n,i}/L_{n,i-1} = \fhat_{i-1}^{A_n}(Y_i) / \fbest(Y_i)$, $i=1,\ldots,n$.  Also 
\[ \E\{(L_{n,i}/L_{n,i-1})^{1/2} \mid \Y_{i-1}\} = 1-h^\star(\fbest,\fhat_{i-1}^{A_n}), \quad i=1,\ldots,n. \]
We can now anticipate a version of Lemma~\ref{lem:rates} for the mis-specified case.  

\begin{lem}
\label{lem:rates.miss}
Let $\Pi$ be $\eps_n$-thick at $\fbest$, with $C$ as in \eqref{eq:thick.miss}.  If $A_n$ is convex and $d\eps_n^2$-separated from $\fbest$, with $d > C+1$, then $\E(L_{n,n}^{1/2}) \leq \Pi(A_n)^{1/2} e^{-dn\eps_n^2}$. 
\end{lem}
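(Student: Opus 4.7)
The plan is to mirror the proof of Lemma~\ref{lem:rates} almost verbatim, with $\ftrue$ replaced by $\fbest$ and $h$ replaced by $h^\star$. All the pieces have already been assembled by the author in the preceding paragraphs: convexity of $A_n$ guarantees $\fhat_{i-1}^{A_n} \in A_n$ (since $\Pi_{i-1}^{A_n}$ is a probability measure supported on $A_n$), and the identity $\E\{(L_{n,i}/L_{n,i-1})^{1/2} \mid \Y_{i-1}\} = 1 - h^\star(\fbest, \fhat_{i-1}^{A_n})$ is stated just before the lemma. As in Lemma~\ref{lem:rates}, the prior-thickness assumption and the restriction $d>C+1$ are not actually used in this particular bound; they are included only to match the notation of the convergence-rate theorem in which the lemma will be applied.

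Concretely, first I would write down the telescoping identity
\[ \Bigl(\frac{L_{n,n}}{\Pi(A_n)}\Bigr)^{1/2} = \prod_{i=1}^n \Bigl(\frac{L_{n,i}}{L_{n,i-1}}\Bigr)^{1/2}, \]
which uses $L_{n,0} = \Pi(A_n)$. Taking expectations and peeling off factors from the inside out via the tower property, then applying the $h^\star$ conditional-expectation identity at each step, yields
\[ \frac{\E(L_{n,n}^{1/2})}{\Pi(A_n)^{1/2}} = \E\Bigl[\prod_{i=1}^n \{1 - h^\star(\fbest, \fhat_{i-1}^{A_n})\}\Bigr]. \]
Convexity of $A_n$ forces $\fhat_{i-1}^{A_n} \in A_n$ for every $i$, so $d\eps_n^2$-separation gives $h^\star(\fbest, \fhat_{i-1}^{A_n}) > d\eps_n^2$ pointwise in $\omega$. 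Substituting into the display above and applying $1-x \leq e^{-x}$ produces $\E(L_{n,n}^{1/2}) \leq \Pi(A_n)^{1/2}(1-d\eps_n^2)^n \leq \Pi(A_n)^{1/2} e^{-dn\eps_n^2}$, which is the claim.

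The main (and only) conceptual point requiring care is that the outer expectation throughout is with respect to $\prob_{\ftrue}^\infty$, even though $R_n$ is the likelihood ratio against $\fbest$; this is precisely the role played by the weighting factor $\ftrue/\fbest$ in the definition of $H^\star$ and of $h^\star$, which accommodates the mismatch between the reference density in $R_n$ and the true data-generating density. Since the author has built this adjustment into the definition of $h^\star$ and explicitly verified the resulting conditional-expectation identity, nothing beyond the telescoping computation is needed. A minor technical nuisance is that on the null event $\{L_{n,i-1}=0\}$ the ratio $L_{n,i}/L_{n,i-1}$ is undefined, but on that event $L_{n,n}=0$ as well, so the inequality holds trivially and one may restrict to $\{L_{n,i-1}>0\}$ without loss.
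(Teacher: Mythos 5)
Your proof is correct and is essentially identical to the paper's, which simply refers back to the proof of Lemma~\ref{lem:rates}: the same telescoping identity, the same use of convexity to put $\fhat_{i-1}^{A_n}$ in $A_n$, and the same conditional-expectation-plus-separation bound, with $h$ replaced by $h^\star$ and $\ftrue$ by $\fbest$. Your two side remarks (that prior thickness and $d>C+1$ are not actually used in the bound, and that the weight $\ftrue/\fbest$ in $h^\star$ is exactly what reconciles the $\fbest$-based likelihood ratio with expectation under $\ftrue$) are both accurate and match the paper's own commentary after Lemma~\ref{lem:rates}.
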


\begin{proof}
Same as that of Lemma~\ref{lem:rates}.  
\end{proof}

To get a posterior convergence rate result, we must choose sets to be convex and suitably separated, with respect to $h^\star$, from $\fbest$.  For this, a natural choice would be $H^\star$-balls.  Indeed, the triangle inequality argument before, and the fact that ${H^\star}^2 \leq h^\star/2$ shows that $H^\star$-balls centered away from $\fbest$ with sufficiently small radius are separated from $\fbest$.  Technically, a more complicated notion of ``covering numbers for testing under mis-specficiation'' are needed in these cases.  However, if we assume $\FF$ is convex, for simplicity, then these special covering numbers are bounded by ordinary $H^\star$-covering numbers.  See \citet{kleijn}, Lemmas~2.1, 2.3, and the mixture model example in their Section~3. 

\begin{thm}
\label{thm:rates.miss}
Let $\FF$ be convex and $\Pi$ be $\eps_n$-thick at $\fbest$ with constant $C$ as in \eqref{eq:thick.miss}.  Suppose there exists a sequence $(\FF_n)$ such that $\Pi(\FF_n^c) \lesssim e^{-rn\eps_n^2}$, where $r > C+1$, and $\log N(\eps_n, \FF_n, H^\star) \lesssim n\eps_n^2$.  Then $\Pi_n(\{f: H^\star(\fbest,f) \gtrsim \eps_n\}) \to 0$ in probability.  
\end{thm}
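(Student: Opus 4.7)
My plan is to reproduce the argument of Section~\ref{SS:proof1} essentially verbatim, replacing $\ftrue$ by $\fbest$, $H$ by $H^\star$, and Lemmas~\ref{lem:thick} and~\ref{lem:rates} by their mis-specified analogues \eqref{eq:thick.lemma.miss} and Lemma~\ref{lem:rates.miss}. Fix a constant $M$, to be chosen later, set $B_n = \{f: H^\star(\fbest, f) > M\eps_n\}$, and decompose $\Pi_n(B_n) \leq \Pi_n(\FF_n^c) + \Pi_n(B_n \cap \FF_n)$. The first term will vanish in probability by a mis-specified version of Lemma~\ref{lem:prior.to.posterior}, whose proof uses only (i) the numerator bound $\E(L_n) \leq \Pi(U_n)$, which still holds because $\int f\,(\ftrue/\fbest)\,d\mu \leq 1$ by Lemma~2.3 of \citet{kleijn}, and (ii) a lower bound on the denominator, which is exactly \eqref{eq:thick.lemma.miss}; the hypothesis $\Pi(\FF_n^c) \lesssim e^{-rn\eps_n^2}$ with $r > C+1$ finishes that half.

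For the second term I will cover $B_n \cap \FF_n$ by $J_n \lesssim e^{Rn\eps_n^2}$ many $H^\star$-balls $A_{n,j}$ of radius $M\eps_n/2$, each intersected with $\FF$ and with center in $B_n \cap \FF_n$; the entropy hypothesis makes this possible. To invoke Lemma~\ref{lem:rates.miss} on each $A_{n,j}$ I need $h^\star$-separation from $\fbest$ and convexity in the sense of Definition~\ref{def:convex}. For separation, the triangle inequality for $H^\star$ gives $H^\star(\fbest,f) > M\eps_n/2$ for every $f \in A_{n,j}$, and since $(H^\star)^2/2 \leq h^\star$ this yields $h^\star(\fbest, f) > M^2\eps_n^2/8$. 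For convexity, note that $f \mapsto H^\star(f_0,f)^2 = \int (f^{1/2}-f_0^{1/2})^2 (\ftrue/\fbest)\,d\mu$ is convex because $a \mapsto (a^{1/2}-b^{1/2})^2$ is convex in $a \geq 0$ for each fixed $b \geq 0$; Jensen pointwise plus Fubini then show the sublevel sets satisfy Definition~\ref{def:convex}, and intersection with the assumed convex $\FF$ preserves this property.

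With $M^2 > 8(C+1)$, Lemma~\ref{lem:rates.miss} yields $\E(L_{n,n,j}^{1/2}) \leq \Pi(A_{n,j})^{1/2} e^{-(M^2/8)n\eps_n^2}$. Using $\Pi(A_{n,j})^{1/2}\leq 1$ and summing gives
\[ \sum_{j=1}^{J_n} \E(L_{n,n,j}^{1/2}) \lesssim e^{-(M^2/8 - R)n\eps_n^2}. \]
Combining the bound $\Pi_n(B_n \cap \FF_n) \leq I_n^{-1/2}\sum_j L_{n,n,j}^{1/2}$ with Markov's inequality on the numerator and \eqref{eq:thick.lemma.miss} on the denominator, the total-probability argument at the end of Section~\ref{SS:proof1} runs unchanged provided $M$ is large enough that some $c \in (C+1, M^2/4 - 2R)$ exists. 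This forces $\Pi_n(B_n \cap \FF_n) \to 0$ in probability and completes the argument.

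The only real obstacle is the mismatch between the two natural metrics: the entropy hypothesis is phrased in $H^\star$, while Lemma~\ref{lem:rates.miss} wants $h^\star$-separation and convexity in the strong integrability sense. The convexity hypothesis on $\FF$ is there precisely to keep the $H^\star$-balls convex after being restricted to the model, and the inequality $(H^\star)^2/2 \leq h^\star$ bridges the separation gap. Once these two points are cleared, the proof is a direct transcription of Section~\ref{SS:proof1}.
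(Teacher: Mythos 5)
Your proof is correct and is precisely the fleshed-out version of what the paper means by ``Similar to that of Theorem~\ref{thm:rates}'': you substitute $\fbest$ for $\ftrue$, $H^\star$ for $H$, and the mis-specified auxiliary results \eqref{eq:thick.lemma.miss} and Lemma~\ref{lem:rates.miss} for Lemma~\ref{lem:thick} and Lemma~\ref{lem:rates}, then run the Section~\ref{SS:proof1} argument unchanged. You correctly identify the only two points that are not literal transcriptions---that $\E(L_n)\leq\Pi(U_n)$ still holds because $\int(f/\fbest)\ftrue\,d\mu\leq 1$, and that convexity of the $H^\star$-balls (intersected with the assumed-convex $\FF$) together with $({H^\star})^2/2\leq h^\star$ supplies the convexity-and-separation hypotheses of Lemma~\ref{lem:rates.miss}---so the proposal matches the paper's intended proof.
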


\begin{proof}
Similar to that of Theorem~\ref{thm:rates}.  
%For a large constant $M > 0$ to be determined, let $B_n = \{f: H^\star(\fbest,f) > M\eps_n\}$.  Split $B_n = (B_n \cap \FF) \cup (B_n \cap \FF_n^c)$.  The part with $\FF_n^c$ can be ignored, for its posterior probability vanishes due to the condition on $\Pi(\FF_n^c)$ and Lemma~\ref{lem:prior.to.posterior}.  Cover $B_n \cap \FF_n$ by $H^\star$-balls $A_{nj}$, $j=1,\ldots,J(n)$, of radius $M\eps_n/2$, where $J(n) \leq e^{Rn\eps_n^2}$, $R > 0$.  These balls are convex, so $\fhat_i^{A_{nj}} \in A_{nj}$ for all $i, j, n$. By Lemma~\ref{lem:rates.miss}, $\E(L_n^{A_{nj}}) \leq \Pi(A_{nj}) e^{-dn\eps_n^2}$ for some $d > C+1$, for each $j=1,\ldots,J(n)$.  By choosing $M=M(C,R)$ large enough, and applying \eqref{eq:thick.lemma.miss}, the the remaining part of the proof of Theorem~\ref{thm:rates} goes through.  
\end{proof}

%\subsection{Illustration: mixture models...?}
%\todo{to do...}

\section{Extension to independent non-iid models}
\label{S:independent}

\subsection{Setup and notation}

Let $Y_1,\ldots,Y_n$ be independent but not necessarily iid.  To formulate this, we shall use some slightly different notation compared to the previous sections.  Suppose that $Y_i \sim f_{\theta i}$, where, for each $\theta \in \Theta$, $f_{\theta i}$ is a density with respect to a measure $\mu_i$ on $\YY_i$, $i=1,\ldots,n$.  An important example is the fixed-design Gaussian regression, i.e., $Y_i \sim \nm(\theta(x_i), 1)$, where $x_i$ is a fixed covariate and $\theta(\cdot)$ is an unknown regression function.  The new $\theta$ notation is simply to indicate that there is a single unknown characteristic $\theta$, common to all $i=1,\ldots,n$; the manner in which $\theta$ is used can differ across $i$, however.  

Let $\theta^\star$ denote the ``true'' value of $\theta$.  As before, define the likelihood ratio as 
\[ R_n(\theta) = \prod_{i=1}^n f_{\theta i}(Y_i)/f_{\theta^\star i}(Y_i). \]
If $\Pi$ is a prior distribution on $\Theta$, then the posterior distribution for $\theta$, given observations $Y_1,\ldots,Y_n$, is given by 
\begin{equation}
\label{eq:post.indep}
\Pi_n(B) = \Pi(B \mid Y_1,\ldots,Y_n) = \frac{\int_B R_n(\theta) \,\Pi(d\theta)}{\int_\Theta R_n(\theta) \,\Pi(d\theta)}, \quad B \subseteq \Theta.
\end{equation}
The goal is to show that $\Pi_n(B_n^c) \to 0$ for $B_n$ a shrinking neighborhood of $\theta^\star$.  Next we restate our main definitions.  

\begin{description}
\item[\it Prior thickness.]  Let 
\[ \Kbar_n(\theta^\star,\theta) = \frac1n \sum_{i=1}^n K(f_{\theta^\star i}, f_{\theta i}) \quad \text{and} \quad \Vbar_n(\theta^\star,\theta) = \frac1n \sum_{i=1}^n V(f_{\theta^\star i}, f_{\theta i}), \]
where $K$ and $V$ are defined in Section~\ref{SS:thickness}.  Then we say the prior $\Pi$ is $\eps_n$-thick at $\theta^\star$ if 
for some constant $C > 0$, 
\begin{equation}
\label{eq:thick.indep}
\Pi(\{\theta: \Kbar_n(\theta^\star,\theta) \leq \eps_n^2,\, \Vbar_n(\theta^\star,\theta) \leq \eps_n^2\}) \geq e^{-Cn\eps_n^2}.
\end{equation}
It follows from Lemma~10 of \citet{ghosalvaart2007a} that the conclusion of Lemma~\ref{lem:thick} above holds in the independent non-iid case.  That is,
\begin{equation}
\label{eq:thick.lemma.indep}
\text{$\prob(I_n \leq e^{-cn\eps_n^2}) \to 0$ for any $c > C+1$}, 
\end{equation}
where $I_n = \int R_n(\theta) \,\Pi(d\theta)$ is the denominator in \eqref{eq:post.indep}.  

\item[\it Separation.] For a distance on $\Theta$, we shall employ a type of mean-Hellinger distance $H_n$, whose square is given by
\[ H_n(\theta^\star,\theta)^2 = \frac1n \sum_{i=1}^n H(f_{\theta^\star i}, f_{\theta i})^2. \]
As usual, set $h_n = H_n^2/2$.  Then we say that $\theta^\star$ and a set $A \subseteq \Theta$ are $\delta$-separated (with respect to $h_n$) if $h_n(\theta^\star,\theta) > \delta$ for all $\theta \in A$.  
\end{description}

\subsection{Convergence rate results}

Before stating the independent non-iid version of Proposition~\ref{prop:barron}, we need to set some more notation.  Let $\fhat_{(i-1)i}$ denote the predictive density of $Y_i$ given $Y_1,\ldots,Y_{i-1}$, i.e., 
\[ \fhat_{(i-1)i}(y) = \int f_{\theta i}(y) \,\Pi_{i-1}(d\theta), \quad i=1,\ldots,n. \]

\begin{prop}
\label{prop:barron.indep}
For a sequence $(\eps_n)$, with $\eps_n \to 0$, let $\KK_n=\{\theta: \Kbar_n(\theta^\star,\theta) \leq \eps_n^2\}$.  If $\log \Pi(\KK_n) \gtrsim -n\eps_n^2$, then $n^{-1} \sum_{i=1}^n \E\{K(f_{\theta^\star i},\fhat_{(i-1)i})\} \lesssim \eps_n^2$.  
\end{prop}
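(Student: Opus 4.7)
The plan is to mimic the proof of Proposition~\ref{prop:barron}, with only minor notational adjustments to accommodate the independent but non-iid structure of the observations. Write $F_n^\star(y_{1:n}) = \prod_{i=1}^n f_{\theta^\star i}(y_i)$ for the true joint density of $Y_{1:n}$, and let $\fhat^n(y_{1:n}) = \int \prod_{i=1}^n f_{\theta i}(y_i)\,\Pi(d\theta)$ be the corresponding Bayesian joint predictive density. Restricting the integral defining $\fhat^n$ to $\KK_n$ gives the pointwise lower bound $\fhat^n \geq \Pi(\KK_n)\,\fhat^{n,\KK_n}$, where $\fhat^{n,\KK_n}$ is the joint predictive density computed under the restricted-and-normalized prior $\Pi^{\KK_n}$.

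First I would follow the convexity-of-$K$ argument from Proposition~\ref{prop:barron} to obtain
\[
n^{-1} K(F_n^\star, \fhat^n) \leq n^{-1}\int_{\KK_n} K\Bigl(F_n^\star, \prod_{i=1}^n f_{\theta i}\Bigr)\,\Pi^{\KK_n}(d\theta) - n^{-1}\log\Pi(\KK_n).
\]
The chain rule for Kullback--Leibler divergence on products of independent (but not identical) factors now reads $K(F_n^\star, \prod_{i=1}^n f_{\theta i}) = \sum_{i=1}^n K(f_{\theta^\star i}, f_{\theta i}) = n\,\Kbar_n(\theta^\star,\theta)$, which replaces the ``$nK(\ftrue,f)$'' step in the iid proof. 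By definition of $\KK_n$ the integrand is bounded by $\eps_n^2$, and by the hypothesis $\log\Pi(\KK_n) \gtrsim -n\eps_n^2$ the second term is $\lesssim \eps_n^2$; hence $n^{-1} K(F_n^\star,\fhat^n) \lesssim \eps_n^2$.

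Next I would factor $\fhat^n(Y_{1:n}) = \prod_{i=1}^n \fhat_{(i-1)i}(Y_i)$ by the same sequential ``peel-off-the-last-factor'' argument used in the iid case, rewriting $\int f_{\theta n}(Y_n)\prod_{i<n} f_{\theta i}(Y_i)\,\Pi(d\theta)$ as $\fhat_{(n-1)n}(Y_n)$ times the normalizing constant for $\Pi_{n-1}$ and iterating. Taking logs and expectations and using the tower property, with $\fhat_{(i-1)i}$ being $\Y_{i-1}$-measurable, yields $K(F_n^\star, \fhat^n) = \sum_{i=1}^n \E\{K(f_{\theta^\star i}, \fhat_{(i-1)i})\}$. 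Combining with the earlier Cesaro bound delivers the claim.

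The main obstacle is bookkeeping rather than a substantive new idea: one must keep straight that the $i$-th factor of each product uses $f_{\theta i}$ (not a common $f_\theta$), so that the chain rule produces the average $\Kbar_n$ rather than a single $K$, and that the predictive densities $\fhat_{(i-1)i}$ likewise carry the $i$-subscript inherited from the data-generating mechanism. Once this is tracked, every displayed inequality from the proof of Proposition~\ref{prop:barron} transfers with $\ftrue(Y_i) \mapsto f_{\theta^\star i}(Y_i)$ and $f(Y_i) \mapsto f_{\theta i}(Y_i)$ inside the $i$-th factor, and no extra hypothesis beyond the stated prior-mass condition on $\KK_n$ is required.
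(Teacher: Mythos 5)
Your proposal follows exactly the paper's approach: factor the joint predictive density as $\prod_{i=1}^n \fhat_{(i-1)i}(Y_i)$, reduce to bounding $n^{-1}K(F_n^\star,\fhat^n)$, and then apply the convexity argument with the non-iid chain rule $K(F_n^\star,\prod_i f_{\theta i}) = n\Kbar_n(\theta^\star,\theta)$. The paper's proof is stated more tersely (deferring to Proposition~\ref{prop:barron}), but your filled-in details are the intended ones and are correct.
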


\begin{proof}
The proof is similar to that of Proposition~\ref{prop:barron} once we set the appropriate notation, etc.  Let $\fhat^n$ denote the joint density for $(Y_1,\ldots,Y_n)$ under the Bayes model.  Then, just like in the proof of Proposition~\ref{prop:barron}, 
\[ \fhat^n(Y_1,\ldots,Y_n) = \int \prod_{i=1}^n f_{\theta i}(Y_i) \, \Pi(d\theta) = \prod_{i=1}^n \fhat_{(i-1)i}(Y_i). \]
It follows that $K(f_{\theta^\star}^n, \fhat^n) = \sum_{i=1}^n \E\{K(f_{\theta^\star i}, \fhat_{(i-1)i})\}$.  Therefore, we can safely work with the notationally simpler $n^{-1}K(f_{\theta^\star}^n, \fhat^n)$.  From this point, follow the proof of Proposition~\ref{prop:barron}, i.e., restrict $\Theta$ to $\KK_n$ and use convexity of the Kullback--Leibler number.  
\end{proof}

For posterior convergence rates, take a sequence of subsets $(A_n)$ in $\Theta$ and let $L_{n,i}$ be the numerator of $\Pi_i(A_n)$ in \eqref{eq:post.indep}, where $L_{n,0}=\Pi(A_n)$.  As before, we have
\[ L_{n,i}/L_{n,i-1} = \fhat_{(i-1)i}^{A_n}(Y_i) / f_{\theta^\star i}(Y_i), \quad i=1,\ldots,n, \quad n \geq 1. \]
where $f_{(i-1)i}^{A_n}$ is the predictive density from before, but with the posterior $\Pi_{i-1}$ restricted to the set $A_n \subset \Theta$.  Also, 
\[ \E\{(L_{n,i}/L_{n,i-1})^{1/2} \mid \Y_{i-1}\} = 1-h(f_{\theta^\star i}, \fhat_{(i-1)i}^{A_n}), \]
where $h=H^2/2$ and $H$ is the usual Hellinger distance between densities.  With this, we are ready for an analogue of Lemma~\ref{lem:rates} for the independent non-iid case.  

\begin{lem}
\label{lem:rates.indep}
Let $\Pi$ be $\eps_n$-thick at $\theta^\star$, with $C$ as in \eqref{eq:thick.indep}.  If $A_n$ is convex and $d\eps_n^2$-separated from $\theta^\star$, with $d > C+1$, then $\E(L_{n,n}^{1/2}) \leq \Pi(A_n)^{1/2} e^{-dn\eps_n^2}$. 
\end{lem}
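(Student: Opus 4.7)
The plan is to reproduce the iid argument of Lemma~\ref{lem:rates} almost verbatim, with the non-iid averaged Hellinger distance $h_n$ replacing the per-observation $h$ at exactly one place. I would start from the telescoping product
$$ \bigl\{L_{n,n}/\Pi(A_n)\bigr\}^{1/2} = \prod_{i=1}^n (L_{n,i}/L_{n,i-1})^{1/2}, $$
take expectations of both sides, and condition on $\Y_{i-1}$ from the inside out via the tower property, invoking the identity $\E\{(L_{n,i}/L_{n,i-1})^{1/2} \mid \Y_{i-1}\} = 1 - h(f_{\theta^\star i}, \fhat_{(i-1)i}^{A_n})$ established in the setup. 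This reduces the claim to bounding $\E\bigl[\prod_{i=1}^n\{1 - h(f_{\theta^\star i}, \fhat_{(i-1)i}^{A_n})\}\bigr] \leq e^{-dn\eps_n^2}$.

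The second step is to convert the product of per-$i$ factors into an exponential of a sum, so that the averaged distance $h_n$ can be brought in. Using $1 - x \leq e^{-x}$ termwise (or, equivalently, concavity of $\log(1-x)$), I would get
$$ \prod_{i=1}^n \bigl\{1 - h(f_{\theta^\star i}, \fhat_{(i-1)i}^{A_n})\bigr\} \leq \exp\Bigl\{-\textstyle\sum_{i=1}^n h(f_{\theta^\star i}, \fhat_{(i-1)i}^{A_n})\Bigr\}, $$
reducing the task to showing that $n^{-1}\sum_i h(f_{\theta^\star i}, \fhat_{(i-1)i}^{A_n}) \geq d\eps_n^2$.

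This is where the convexity of $A_n$ enters, interpreted now in the density-tuple sense: for every probability measure $\Phi$ on $A_n$, the collection of mixed marginals $\bigl(\int f_{\theta i}\,\Phi(d\theta)\bigr)_{i=1}^n$ is realized by the coordinate densities $(f_{\theta' i})_{i=1}^n$ of a single $\theta' \in A_n$. Applying this with $\Phi = \Pi_{i-1}^{A_n}$ identifies $\fhat_{(i-1)i}^{A_n}$ with $f_{\theta' i}$ for some $\theta' \in A_n$ (common across $i$), so separation of $\theta^\star$ from $A_n$ with respect to $h_n$ gives $n^{-1}\sum_i h(f_{\theta^\star i}, \fhat_{(i-1)i}^{A_n}) = h_n(\theta^\star, \theta') > d\eps_n^2$. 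Plugging this back and multiplying through by $\Pi(A_n)^{1/2}$ closes the argument, yielding $\E(L_{n,n}^{1/2}) \leq \Pi(A_n)^{1/2} e^{-dn\eps_n^2}$.

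The main obstacle is making the convexity step rigorous: since $A_n$ lives in $\Theta$ rather than in a density space, one must specify how the iid definition of convexity lifts to a parameter set. A naive per-$i$ reading would only give a different $\theta_i' \in A_n$ for each $i$, in which case separation (being a single-$\theta$ statement) no longer controls the coordinatewise sum. What is genuinely needed is the common-realization statement above, which is a nontrivial structural assumption. If it is unavailable, a fallback is to work with the joint density $f_\theta^{(n)} = \prod_i f_{\theta i}$ on $\YY_1 \times \cdots \times \YY_n$ and exploit multiplicativity of the Hellinger affinity to tie the product $\prod_i(1 - h_i)$ directly to a single Hellinger-type quantity on the product space, at which point the iid template applies without change.
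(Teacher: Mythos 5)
Your proposal correctly mirrors the paper's template (the paper's own proof of this lemma is literally ``just like the proof of Lemma~\ref{lem:rates}'') and, more importantly, puts its finger on exactly where the na\"ive transcription from the iid case breaks down: the restricted posterior $\Pi_{i-1}^{A_n}$ is a different measure at each stage $i$, so the predictive densities $\fhat_{(i-1)i}^{A_n}$ do not collectively correspond to the coordinates $(f_{\theta' i})_{i=1}^n$ of any single $\theta' \in A_n$, and $h_n$-separation---being a statement about one $\theta$ at a time---does not by itself control $\sum_i h(f_{\theta^\star i}, \fhat_{(i-1)i}^{A_n})$. Your ``common-realization'' convexity would close the gap if it held, but as you yourself observe it cannot: the mixing measures vary with $i$, so even a convexity notion strong enough to produce a realization parameter for each \emph{fixed} $\Phi$ still delivers a different $\theta_i'$ for each coordinate. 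Your fallback via multiplicativity of the Hellinger affinity also stalls, because the joint object that appears after Fubini is the mixture $\int_{A_n} f_\theta^{(n)}\,\Pi^{A_n}(d\theta)$, not a product of per-coordinate marginals, so the factorization $A(f_{\theta^\star}^{(n)}, g^{(n)}) = \prod_i A(f_{\theta^\star i}, g_i)$ does not apply to it.

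What actually closes the gap---visible in the paper's proof of Theorem~\ref{thm:rates.indep}, where this lemma is invoked---is that the sets $A_n$ it is applied to are balls in the \emph{max-Hellinger} semi-metric $H_{n,\infty}$, centered at some $\theta_0$ with radius $r \asymp \eps_n$. Convexity of $h$ then gives the per-$i$, measure-uniform bound $H(f_{\theta_0 i}, \fhat_{(i-1)i}^{A_n}) \le r$ for every $i$ and every data realization, and the reverse $\ell^2$ triangle inequality over the coordinate index $i$, combined with $h_n$-separation of $A_n$ from $\theta^\star$, yields the deterministic pointwise bound $\sum_{i=1}^n h(f_{\theta^\star i}, \fhat_{(i-1)i}^{A_n}) \ge d n \eps_n^2$. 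This per-$i$ max-metric control is exactly why $H_{n,\infty}$ appears in the entropy condition of Theorem~\ref{thm:rates.indep}, and it is genuinely additional structure beyond ``convex plus $h_n$-separated.'' There is also a smaller step you skipped between $\prod_i(1 - y_i) \le e^{-\sum_i y_i}$ and the desired expectation bound: since $y_i = h(f_{\theta^\star i}, \fhat_{(i-1)i}^{A_n})$ is $\Y_{i-1}$-measurable and random, one should factor $\prod_i X_i = \bigl(\prod_i Z_i\bigr)\bigl(\prod_i (1 - y_i)\bigr)$ with $Z_i = X_i/(1 - y_i)$, note that $\E(Z_i \mid \Y_{i-1}) = 1$ so $\E(\prod_i Z_i) = 1$, and then pull out the deterministic bound on $\prod_i(1 - y_i)$. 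In short, you located the right gap, but the resolution lies in the $H_{n,\infty}$-ball structure rather than in either of your two proposed fixes.
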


\begin{proof}
Just like the proof of Lemma~\ref{lem:rates}.  %That is, if $(n\eps_n^2)^{-1}\sum_{i=1}^n \E(X_i^{A_n}) = 0$, and the second term is bigger than $(C+1)/2$ by \eqref{eq:condition.indep}, the first term, involving the $L_i^{A_n}$'s must be less than $-(C+1)/2$.  
\end{proof}

In the following theorem, we shall also need a type of max-Hellinger distance, 
\[ H_{n,\infty}(\theta^\star,\theta) = \max_{1 \leq i \leq n} H(f_{\theta^\star i}, f_{\theta i}). \]
Also let $h_{n,\infty} = H_{n,\infty}^2/2$.  This additional sort of distance will be needed for the general construction of sets which are both convex and sufficiently separated from $\theta^\star$.  Some remarks on removing the need for $H_{n,\infty}$ are given following the proof.  

\begin{thm}
\label{thm:rates.indep}
Let $\Pi$ be $\eps_n$-thick at $\theta^\star$ with constant $C$ as in \eqref{eq:thick.indep}.  Suppose there exists a sequence $(\Theta_n)$ such that $\Pi(\Theta_n^c) \lesssim e^{-rn\eps_n^2}$, where $r > C+1$, and $\log N(\eps_n, \Theta_n, H_{n,\infty}) \lesssim n\eps_n^2$.  Then $\Pi_n(\{\theta: H_n(\theta^\star, \theta) \gtrsim \eps_n\}) \to 0$ in probability.  
\end{thm}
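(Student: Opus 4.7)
The plan is to mimic the proof of Theorem~\ref{thm:rates} in Section~\ref{SS:proof1} essentially line by line, replacing each ingredient by its independent non-iid counterpart. Fix a large constant $M>0$ to be chosen, let $B_n=\{\theta : H_n(\theta^\star,\theta) > M\eps_n\}$, and split
\[
\Pi_n(B_n) \;\leq\; \Pi_n(\Theta_n^c) + \Pi_n(B_n\cap\Theta_n).
\]
The first term is handled by the word-for-word independent non-iid analogue of Lemma~\ref{lem:prior.to.posterior}, whose proof needs only a Markov bound on $\int_{\Theta_n^c} R_n\,d\Pi$, controlled by the hypothesis $\Pi(\Theta_n^c)\lesssim e^{-rn\eps_n^2}$ with $r>C+1$, together with the lower bound \eqref{eq:thick.lemma.indep} on $I_n$.

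For the second term, I will use the covering hypothesis to find a cover $B_n\cap\Theta_n \subseteq \bigcup_{j=1}^{J_n} A_{nj}$ with $J_n \lesssim e^{Rn\eps_n^2}$ for some $R>0$, where each $A_{nj}$ is an $H_{n,\infty}$-ball of radius $M\eps_n/2$ centered at some $\theta_j\in B_n$. The crux is verifying that each $A_{nj}$ meets the hypotheses of Lemma~\ref{lem:rates.indep}. Convexity holds coordinatewise: for every index $i$ the image $\{f_{\theta i}:\theta\in A_{nj}\}$ lies inside the convex Hellinger ball of radius $M\eps_n/2$ about $f_{\theta_j i}$, so the restricted predictive $\fhat_{(i-1)i}^{A_{nj}}$ also lies in that ball. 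Separation follows from the triangle inequality combined with $H_n \leq H_{n,\infty}$: for any $\theta \in A_{nj}$,
\[
H_n(\theta^\star,\theta) \;\geq\; H_n(\theta^\star,\theta_j) - H_{n,\infty}(\theta_j,\theta) \;>\; M\eps_n/2,
\]
so $A_{nj}$ is $(M^2\eps_n^2/8)$-separated from $\theta^\star$ in $h_n$; an $\ell^2$-triangle inequality applied to the squared Hellinger distances across the $n$ coordinates yields the quantity that the proof of Lemma~\ref{lem:rates.indep} actually consumes, namely $n^{-1}\sum_i h(f_{\theta^\star i},\fhat_{(i-1)i}^{A_{nj}}) > M^2\eps_n^2/8$.

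Choosing $M$ so that $M^2/8 - R > C+1$, Lemma~\ref{lem:rates.indep} gives $\E(L_{n,nj}^{1/2}) \leq \Pi(A_{nj})^{1/2} e^{-(M^2/8)n\eps_n^2}$, and summing over $j\leq J_n$ produces $\E(S_n) \lesssim e^{-(M^2/8-R)n\eps_n^2}$ for $S_n=\sum_j L_{n,nj}^{1/2}$. Writing $\Pi_n(B_n\cap\Theta_n) \leq S_n/I_n^{1/2}$, a Markov bound on $S_n$, the bound \eqref{eq:thick.lemma.indep} on $I_n$, and the total-probability calculation from the proof of Theorem~\ref{thm:rates} in Section~\ref{SS:proof1} together finish the argument.

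The main obstacle is the interplay between the two Hellinger-type distances on $\Theta$: the theorem's conclusion is in the averaged $H_n$, while both the entropy hypothesis and the convexity argument live naturally in the uniform $H_{n,\infty}$. The whole proof hinges on $H_{n,\infty}$-balls being simultaneously coordinatewise convex (so restricted predictives stay inside them) and, via $H_n\leq H_{n,\infty}$, sharply $H_n$-separated from $\theta^\star$ as soon as their centers lie in $B_n$. This is also why the theorem is formulated with $H_{n,\infty}$-covering in the hypothesis and $H_n$-concentration in the conclusion.
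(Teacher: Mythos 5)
Your proposal is correct and follows essentially the same route as the paper: decompose $\Pi_n(B_n) \leq \Pi_n(\Theta_n^c) + \Pi_n(B_n\cap\Theta_n)$, cover $B_n\cap\Theta_n$ by $H_{n,\infty}$-balls, verify coordinatewise Hellinger convexity and $h_n$-separation, then apply Lemma~\ref{lem:rates.indep} and the total-probability machinery from the proof of Theorem~\ref{thm:rates}. One thing worth noting: you are actually slightly \emph{more} explicit than the paper at the one delicate spot. The paper shows (i) $h(f_{\theta_j i}, \fhat_{(i-1)i}^{A_{nj}}) \leq M^2\eps_n^2/8$ uniformly in $i$, and (ii) $h_n(\theta^\star,\theta) > M^2\eps_n^2/8$ for $\theta\in A_{nj}$, and then asserts that the predictive densities ``have properties like those densities $f_{\theta i}$ with $\theta\in A_{nj}$'' before invoking Lemma~\ref{lem:rates.indep}. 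Your $\ell^2$-triangle-inequality step, which converts $H(f_{\theta_j i}, \fhat_{(i-1)i}^{A_{nj}}) \leq M\eps_n/2$ for each $i$ plus $H_n(\theta^\star,\theta_j) > M\eps_n$ directly into $n^{-1}\sum_i h(f_{\theta^\star i},\fhat_{(i-1)i}^{A_{nj}}) > M^2\eps_n^2/8$, is exactly what is needed to make that assertion rigorous, since the predictive $\fhat_{(i-1)i}^{A_{nj}}$ need not equal $f_{\phi i}$ for a single $\phi\in A_{nj}$ fixed across $i$. Your constant requirement $M^2/8 - R > C+1$ is a touch more conservative than the paper's $M^2 > 4[(C+1)+2R]$, but both suffice.
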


\begin{proof}
For a constant $M > 0$ to be determined, let $B_n = \{\theta: H_n(\theta^\star,\theta) > M\eps_n\}$.  It suffices to show that $\Pi_n(B_n \cap \Theta_n) \to 0$ in probability.  We cover $B_n \cap \Theta$ by $H_{n,\infty}$-balls $A_{nj}$ of radius $M\eps_n/2$ with centers in $B_n$, where $j=1,\ldots,J_n$ and $J_n \leq e^{Rn\eps_n^2}$, $R > 0$.  That is, for suitable $\theta_j$ satisfying $H_n(\theta^\star,\theta_j) > M\eps_n$, take 
\[ A_{nj} = \{\theta: H_{n,\infty}(\theta_j, \theta) < M\eps_n/2\}, \quad j=1,\ldots,J_n. \]
Note the use of $H_{n,\infty}$ in the definition of $A_{nj}$ as opposed to $H_n$.  Everything will carry through as before as soon as we show that the $A_{nj}$ are convex and $(M^2\eps_n^2/8)$-separated from $\theta^\star$, with respect to $h_n$.  For convexity, let $\Phi_i$, $i=1,\ldots,n$, be any probability measure on $A_{nj}$.  By convexity of $h$ we get
\[ h(f_{\theta_j i}, f_{\Phi_i i}) \leq \int_{A_{nj}} h(f_{\theta^\star i}, f_{\theta i}) \, \Phi_i(d\theta), \quad i=1,\ldots,n. \]
By definition of $A_{nj}$, the right-hand side is bounded by $M^2\eps_n^2/8$.  From here it follows that $A_{nj}$ is convex and, in particular, predictive densities $\fhat_{(i-1)i}^{A_{nj}}$, restricted to $A_{nj}$, have properties like those densities $f_{\theta i}$ with $\theta \in A_{nj}$.  Our use of the max-Hellinger metric is necessary here because the measures $\Phi_i$ can vary with $i$, just like the posteriors $\Pi_i^{A_{nj}}$ vary with $i$.  Now, for separation, given $\theta \in A_{nj}$, the triangle inequality for $H_n$ gives 
\[ H_n(\theta^\star, \theta) \geq H_n(\theta^\star, \theta_j) - H_n(\theta_j, \theta). \]
The first term is greater than $M\eps_n$ by the choice of $\theta_j$.  The second term is less than $H_{n,\infty}(\theta_j,\theta)$ which is less than $M\eps_n/2$ by the definition of $A_{nj}$.  Therefore, $\theta^\star$ and $A_{nj}$ are $(M^2\eps_n^2/8)$-separated with respect to $h_n$.  Now we may apply Lemma~\ref{lem:rates.indep} to each $A_{nj}$ just like in the proof of Theorem~\ref{thm:rates}, to show that if $M$ is large enough, then $\Pi_n(\{\theta: H_n(\theta^\star,\theta) > M\eps_n\}) \to 0$ in probability.  
\end{proof}

The use of a max-Hellinger metric in Theorem~\ref{thm:rates.indep} can be avoided in some cases, e.g., if $H_n$ is equivalent to some fixed metric on $\theta$-space.  One specific example is nonparametric regression using splines; see \citet[][Sec.~7.7]{ghosalvaart2007a}.

\section{Extension to Markov process models}
\label{S:markov}

\subsection{Setup and notation}
\label{SS:setup.markov}

Let $(Y_n: n \geq 0)$ be an ergodic Markov process on $\YY$ with transition density $f_\theta(y' \mid y)$ and stationary density $u_\theta(y)$, both with respect to a $\sigma$-finite measure $\mu$ on $\YY$, and both indexed by $\theta \in \Theta$.  That is, the transition density $f_\theta$ characterizes the one-step moves $Y_n \to Y_{n+1}$ of the process, and $u_\theta$ the limiting marginal distribution of $Y_n$.  Here, like in \citet{ghosalvaart2007a}, we assume the process is at stationarity, i.e., that $Y_0 \sim u_{\theta^\star}$, so that all the marginal distributions are the same and equal to $u_{\theta^\star}$.  The goal here is estimation of the unknown index $\theta$.  Methods developed in the previous sections, particularly in Section~\ref{S:independent}, shall be used to prove posterior convergence rate theorems.  

Following the previous setup, let $\theta^\star$ denote the ``true'' $\theta$ value.  Now define 
\[ R_n(\theta) = \prod_{i=1}^n \frac{f_\theta(Y_i \mid Y_{i-1})}{f_{\theta^\star}(Y_i \mid Y_{i-1})} \cdot \frac{u_\theta(Y_0)}{u_{\theta^\star}(Y_0)} \]
as the likelihood ratio for $(Y_0,Y_1,\ldots,Y_n)$.  For a prior distribution $\Pi$ on $\Theta$ and a measurable set $A \subset \Theta$, Bayes theorem gives the posterior distribution for $f$ as follows:
\begin{equation}
\label{eq:post.markov}
\Pi_n(A) = \Pi(A \mid Y_0,\ldots,Y_n) = \frac{\int_A R_n(\theta) \, \Pi(d\theta)}{\int_\Theta R_n(\theta) \, \Pi(d\theta)}. 
\end{equation}
The primary goal of this section is to investigate the convergence of $\Pi_n(A_n)$, where $A_n$ is the complement of a shrinking neighborhood of $\theta^\star$.  The notion of ``neighborhood'' is more difficult here than in the previous cases; see Section~\ref{SS:main.markov} below.  

\begin{description}
\item[\it Prior thickness.] For concentration properties of the prior $\Pi$, consider  
\begin{align*}
K(\theta^\star,\theta) & = \int K_y(f_{\theta^\star}, f_\theta) u_{\theta^\star}(y) \,\mu(dy), \\
V(\theta^\star,\theta) & = \int V_y(f_{\theta^\star}, f_\theta) u_{\theta^\star}(y) \,\mu(dy),  
\end{align*}
where $K_y(f_{\theta^\star},f_\theta) = K(f_{\theta^\star}(\cdot \mid y), f_\theta(\cdot \mid y))$ is the usual Kullback--Leibler divergence for densities; $V_y$ is defined similarly, for $V$ as in Section~\ref{SS:thickness}.  Let $\Theta_0$ be the set of all $\theta$'s such that both $K(u_{\theta^\star}, u_\theta)$ and $V(u_{\theta^\star}, u_\theta)$ are bounded by~1.  With this notation, we say that the prior $\Pi$ is $\eps_n$-thick at $\theta^\star$ if, for some constant $C > 0$, 
\begin{equation}
\label{eq:thick.markov}
\Pi(\{\theta \in \Theta_0: K(\theta^\star,\theta) \leq \eps_n^2, \, V(\theta^\star,\theta) \leq \eps_n^2\}) \geq e^{-Cn\eps_n^2}. 
\end{equation}
Lemma~10 of \citet{ghosalvaart2007a} gives an analogue of Lemma~\ref{lem:thick} above for the present dependent data case.  That is, for $C$ as in \eqref{eq:thick.markov}, 
\begin{equation}
\label{eq:thick.lemma.markov}
\prob(I_n \leq e^{-cn\eps_n^2}) \to 0 \quad \text{for any $c > C+1$},
\end{equation}
where $I_n$ is the denominator in \eqref{eq:post.markov}.  

\item[\it Separation.] Let $H_y$ be the usual Hellinger distance on transition densities with fixed state $y$, i.e., $H_y(f_{\theta^\star}, f_\theta) = H(f_{\theta^\star}(\cdot \mid y), f_\theta(\cdot \mid y))$.  Also let $h_y=H_y^2/2$.  Now define the max-Hellinger (semi)metric $H_\infty(\theta^\star,\theta) = \sup_y H_y(\theta^\star,\theta)$.  We say that $\theta^\star$ and a set $A \subseteq \Theta$ are $\delta$-separated (with respect to $h_\infty$) if $h_\infty(\theta^\star,\theta) > \delta$ for all $\theta \in A$.  
\end{description}

\subsection{Convergence rate results}
\label{SS:main.markov}

To start, consider the predictive density problem of Section~\ref{S:predictive}.  In this case, the predictive density is itself a transition density.  In particular, we have 
\[ \fhat_{i-1}(y \mid Y_{i-1}) = \int f_\theta(y \mid Y_{i-1}) \,\Pi_{i-1}(d\theta), \quad i=1,\ldots,n, \]
the expected transition density with respect to the posterior distribution $\Pi_{i-1}$.  This is a typical Bayes estimate of the transition density, and the claim is that it converges to the true transition density $f_{\theta^\star}$ as $n \to \infty$.  In particular, we have the following convergence rate result for predictive densities.  

\begin{prop}
\label{prop:barron.markov}
Given $(\eps_n)$ with $\eps_n \to 0$ and $n\eps_n^2 \to \infty$, let $\KK_n = \{\theta: K(\theta^\star,\theta) \leq \eps_n^2, \,K(u_{\theta^\star},u_\theta) < \infty\}$.  If $\log \Pi(\KK_n) \gtrsim -n\eps_n^2$.  Then $n^{-1}\sum_{i=1}^n \E\{K_{Y_{i-1}}(f_{\theta^\star}, \fhat_{i-1})\} \lesssim \eps_n^2$.  
\end{prop}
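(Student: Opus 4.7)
The plan is to adapt the argument of Proposition~\ref{prop:barron} to the Markov setting, replacing the iid joint density by the stationary Markov joint density and handling the extra factor contributed by the stationary initial distribution. Write $f_{\theta^\star}^n(y_0,\ldots,y_n) = u_{\theta^\star}(y_0) \prod_{i=1}^n f_{\theta^\star}(y_i \mid y_{i-1})$ for the joint density under $\theta^\star$, and define the joint Bayes density $\fhat^n(y_0,\ldots,y_n) = \int u_\theta(y_0) \prod_{i=1}^n f_\theta(y_i \mid y_{i-1}) \,\Pi(d\theta)$. The first step is the predictive-density factorization: peeling off the $n$-th factor and iterating, exactly as in Proposition~\ref{prop:barron}, gives $\fhat^n(Y_0,\ldots,Y_n) = \fhat_0(Y_0) \prod_{i=1}^n \fhat_{i-1}(Y_i \mid Y_{i-1})$, where $\fhat_0(y) = \int u_\theta(y) \,\Pi(d\theta)$. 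Taking logs, iterating expectation by conditioning on $Y_{i-1}$, and using stationarity to identify each conditional expectation with $K_{Y_{i-1}}(f_{\theta^\star}, \fhat_{i-1})$, one obtains
$K(f_{\theta^\star}^n, \fhat^n) = K(u_{\theta^\star}, \fhat_0) + \sum_{i=1}^n \E\{K_{Y_{i-1}}(f_{\theta^\star}, \fhat_{i-1})\}$.
Since the first term on the right is non-negative, the sum of interest is bounded above by $K(f_{\theta^\star}^n, \fhat^n)$.

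The second step is to bound $n^{-1}K(f_{\theta^\star}^n, \fhat^n)$ by a constant multiple of $\eps_n^2$. Following the convexity argument in the proof of Proposition~\ref{prop:barron}, restrict the prior in the definition of $\fhat^n$ to $\KK_n$ to produce the inequality
$n^{-1} K(f_{\theta^\star}^n, \fhat^n) \leq n^{-1} \int_{\KK_n} K(f_{\theta^\star}^n, f_\theta^n) \,\Pi^{\KK_n}(d\theta) - n^{-1} \log \Pi(\KK_n)$.
Now invoke the Markov chain rule for KL divergences of joint densities: because the chain is at stationarity, $Y_{i-1} \sim u_{\theta^\star}$ for each $i$, and consequently $K(f_{\theta^\star}^n, f_\theta^n) = K(u_{\theta^\star}, u_\theta) + n K(\theta^\star, \theta)$. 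For $\theta \in \KK_n$, the transition term $K(\theta^\star,\theta)$ is at most $\eps_n^2$ and the stationary-marginal term $K(u_{\theta^\star}, u_\theta)$ is finite; once divided by $n$ the latter is $O(1/n)$, which is $o(\eps_n^2)$ because $n\eps_n^2 \to \infty$. Together with the hypothesis $-n^{-1}\log \Pi(\KK_n) \lesssim \eps_n^2$, this delivers $n^{-1} K(f_{\theta^\star}^n, \fhat^n) \lesssim \eps_n^2$, and hence the claimed bound on the Cesaro average of $\E\{K_{Y_{i-1}}(f_{\theta^\star}, \fhat_{i-1})\}$.

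The main obstacle I expect is making the stationary-marginal contribution $n^{-1} K(u_{\theta^\star}, u_\theta)$ negligible \emph{uniformly} over $\KK_n$. The stated definition of $\KK_n$ requires only $K(u_{\theta^\star}, u_\theta) < \infty$, which by itself is not enough for the interchange of integral and bound used above. The natural remedy is to work instead with $\KK_n \cap \Theta_0$, where $\Theta_0$ from Section~\ref{SS:setup.markov} forces $K(u_{\theta^\star},u_\theta) \leq 1$; the prior thickness condition \eqref{eq:thick.markov} already places sufficient mass there, so the hypothesis on $\log \Pi(\KK_n)$ is effectively unchanged. This initial-distribution bookkeeping is the only genuinely new wrinkle relative to Propositions~\ref{prop:barron} and \ref{prop:barron.indep}, which carried no such boundary factor.
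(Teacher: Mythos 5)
Your proposal follows essentially the same route as the paper's proof: factor the joint Bayes density as $\hat u_0(Y_0)\prod_{i=1}^n \fhat_{i-1}(Y_i\mid Y_{i-1})$, decompose $K(f_{\theta^\star}^n,\fhat^n)$ into the stationary-marginal term plus the Cesaro sum, restrict the prior to $\KK_n$, apply convexity of $K$, and invoke the Markov chain rule for the joint Kullback--Leibler divergence. The one small stylistic difference is that you discard $K(u_{\theta^\star},\hat u_0)\geq 0$ outright, whereas the paper retains it and notes $n^{-1}K(u_{\theta^\star},\hat u_0)=O(\eps_n^2)$; both work.

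The ``obstacle'' you flag at the end is a genuine and correctly diagnosed subtlety that the paper glosses over. As stated, $\KK_n$ requires only $K(u_{\theta^\star},u_\theta)<\infty$ pointwise, which is not enough to conclude that $n^{-1}\int_{\KK_n}K(u_{\theta^\star},u_\theta)\,\Pi^{\KK_n}(d\theta)=o(\eps_n^2)$ after the convexity step --- that integral could be infinite or decay too slowly. Your fix (intersecting with $\Theta_0$, where $K(u_{\theta^\star},u_\theta)\leq 1$, giving a uniform $O(1/n)=o(\eps_n^2)$ bound since $n\eps_n^2\to\infty$) is exactly what the prior thickness condition \eqref{eq:thick.markov} already builds in, so the hypothesis on $\log\Pi(\KK_n)$ is effectively unchanged. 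This is the right resolution; the paper's claim ``on $\KK_n$, $n^{-1}K(u_{\theta^\star},u_\theta)=O(\eps_n^2)$'' should really be read as applying on $\KK_n\cap\Theta_0$.
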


\begin{proof}
Let $\fhat^n$ denote the joint density for $(Y_0,\ldots,Y_n)$ under the Bayes model.  Then 
\[ \fhat^n(Y_0,\ldots,Y_n) = \int u_\theta(Y_0) \prod_{i=1}^n f_\theta(Y_i \mid Y_{i-1}) \, \Pi(d\theta) = \hat u_0(Y_0) \prod_{i=1}^n \fhat_{i-1}(Y_i \mid Y_{i-1}), \]
just like in the proof of Proposition~\ref{prop:barron.indep}, where $\hat u_0(Y_0) = \int u_\theta(Y_0) \,\Pi(d\theta)$.  Another simple calculation shows that if $f_{\theta^\star}^n$ is the joint distribution of $(Y_0,Y_1,\ldots,Y_n)$ under $\theta^\star$, then the joint Kullback--Leibler divergence $K(f_{\theta^\star}^n, \fhat^n)$ equals 
\[ \E \Bigl\{ \log \frac{u_{\theta^\star}(Y_0) \prod_{i=1}^n f_{\theta^\star}(Y_i \mid Y_{i-1})}{\hat u_0(Y_0) \prod_{i=1}^n \fhat_{i-1}(Y_i \mid Y_{i-1})} \Bigr\} = \sum_{i=1}^n \E\{ K_{Y_{i-1}}(\ftrue, \fhat_{i-1})\} + K(u_{\theta^\star},\hat u_0). \]
Observe that $n^{-1}K(u_{\theta^\star},\hat u_0) = O(\eps_n^2)$ and, on $\KK_n$, $n^{-1}K(u_{\theta^\star},u_\theta) = O(\eps_n^2)$.  From here, the proof is just like that of Proposition~\ref{prop:barron}.  
\end{proof}

As before, the assumption of Proposition~\ref{prop:barron.markov} is implied by prior thickness at $\theta^\star$.  The theorem also extends the result in Corollary~2.1 of \citet{ghosal.tang.2006}. Indeed, our result is $n^{-1}\sum_{i=1}^n K_{Y_{i-1}}(\ftrue, \fhat_{i-1}) = O_P(\eps_n^2)$, which is stronger than the $o_P(1)$ obtained by these authors.  Our version of the Kullback--Leibler property is more strict than theirs, but this is typical when convergence rates are sought.  

Let $(A_n)$ be a sequence of measurable subsets of $\Theta$, and let $L_{n,i} = \int_{A_n} R_i(\theta) \,\Pi(d\theta)$ be the numerator of the posterior probability $\Pi_i(A_n)$ in \eqref{eq:post.markov}.  Then 
\[ L_{n,i} / L_{n,i-1} = \fhat_{i-1}^{A_n}(Y_i \mid Y_{i-1}) / f_{\theta^\star}(Y_i \mid Y_{i-1}), \quad i=1,\ldots,n, \quad n \geq 1, \]
where $\fhat_{i-1}^{A_n}$ is the predictive transition density when the posterior $\Pi_{i-1}$ is restricted to $A_n$.  We also have
\begin{equation}
\label{eq:h.y}
\E\{(L_{n,i} / L_{n,i-1})^{1/2} \mid \Y_{i-1}\} = 1-h_{Y_{i-1}}(f_{\theta^\star}, \fhat_{i-1}^{A_n}), \quad i=1,\ldots,n. 
\end{equation}
We can now present an extension of Lemma~\ref{lem:rates} for the case of Markov processes.   

\begin{lem}
\label{lem:rates.markov}
Let $\Pi$ be $\eps_n$-thick at $\theta^\star$, with $C$ as in \eqref{eq:thick.markov}.  If $A_n$ is convex and $d\eps_n^2$-separated from $\theta^\star$, with $d > C+1$, then $\E(L_{n,n}^{1/2}) \leq \Pi(A_n)^{1/2} e^{-dn\eps_n^2}$.   
\end{lem}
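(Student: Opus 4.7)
The plan is to mirror the telescoping argument used to prove Lemma~\ref{lem:rates} and its analogues Lemmas~\ref{lem:rates.miss} and \ref{lem:rates.indep}, specialized to the Markov transition structure via the identity \eqref{eq:h.y}. First I would write the ratio as the telescoping product
\[ \Bigl( \frac{L_{n,n}}{\Pi(A_n)} \Bigr)^{1/2} = \prod_{i=1}^n \Bigl( \frac{L_{n,i}}{L_{n,i-1}} \Bigr)^{1/2}, \]
take unconditional expectation, and then peel off the factors from the outside in by iterated conditioning on $\Y_{n-1}, \Y_{n-2}, \ldots$ (noting that for each $i$ the factors with index $<i$ are $\Y_{i-1}$-measurable, since both the posterior $\Pi_{i-1}$ and the state $Y_{i-1}$ depend only on $Y_0, \ldots, Y_{i-1}$). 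At the $i$th peel, the identity \eqref{eq:h.y} replaces the innermost conditional expectation by $1 - h_{Y_{i-1}}(f_{\theta^\star}, \fhat_{i-1}^{A_n})$, yielding
\[ \frac{\E(L_{n,n}^{1/2})}{\Pi(A_n)^{1/2}} = \E \Bigl[ \prod_{i=1}^n \bigl\{ 1 - h_{Y_{i-1}}(f_{\theta^\star}, \fhat_{i-1}^{A_n}) \bigr\} \Bigr]. \]

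The second step invokes convexity: by Definition~\ref{def:convex} applied for each fixed $y$ to the family of transition densities $\{f_\theta(\cdot \mid y) : \theta \in A_n\}$, the restricted predictive transition density $\fhat_{i-1}^{A_n}(\cdot \mid y)$ behaves as a member of this convex family at every $y$. Combined with the $d\eps_n^2$-separation of $A_n$ from $\theta^\star$ with respect to $h_\infty$, this forces $h_{Y_{i-1}}(f_{\theta^\star}, \fhat_{i-1}^{A_n}) > d\eps_n^2$ almost surely, so each factor in the product is at most $1 - d\eps_n^2$. Bounding by $(1 - d\eps_n^2)^n \le e^{-dn\eps_n^2}$ and multiplying back through by $\Pi(A_n)^{1/2}$ completes the argument.

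The main obstacle is reconciling the pointwise $h_{Y_{i-1}}$ appearing in the telescoping identity with the sup-type $h_\infty = \sup_y h_y$ appearing in the separation hypothesis; the inequality $h_y \le h_\infty$ runs the wrong way for supplying a pointwise lower bound directly. For the chain of inequalities to go through, the $h_\infty$-separation must be read as a uniform-in-$y$ lower bound of $d\eps_n^2$ on $h_y(f_{\theta^\star}, \fhat_{i-1}^{A_n})$, which is precisely why $H_\infty$ (rather than an averaged Hellinger as in the independent non-iid case) is the natural metric for the Markov setting, paralleling the role of $H_{n,\infty}$ in the proof of Theorem~\ref{thm:rates.indep}. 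Note also that the $\eps_n$-thickness assumption on $\Pi$ does not actually enter the bound itself; it is retained in the hypothesis only to align notation with the companion statement \eqref{eq:thick.lemma.markov} needed later for the posterior-rate theorem.
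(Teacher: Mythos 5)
Your outline reproduces the paper's intended argument exactly---the paper's own ``proof'' is the one-liner ``Exactly the same as that of Lemma~\ref{lem:rates}''---and the telescoping product, iterated conditioning on $\Y_{i-1}$, the use of \eqref{eq:h.y}, the pointwise-in-$y$ convexity step, and the observation that $\eps_n$-thickness plays no role in the bound itself are all in order. So the mechanics match.

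But the tension you flag in your final paragraph is a genuine gap, not a reading issue, and your proposed resolution changes the hypothesis rather than closing the gap. After peeling, each conditional factor is $1 - h_{Y_{i-1}}(f_{\theta^\star},\fhat_{i-1}^{A_n})$, and to bound the product by $(1-d\eps_n^2)^n$ you need $h_y(f_{\theta^\star},\fhat_{i-1}^{A_n}) > d\eps_n^2$ for every state $y$ that $Y_{i-1}$ can occupy almost surely, i.e.\ an $\inf_y$ lower bound. The separation hypothesis as defined in Section~\ref{SS:setup.markov} is $h_\infty(\theta^\star,\theta) = \sup_y h_y(\theta^\star,\theta) > d\eps_n^2$ for all $\theta \in A_n$, which is perfectly compatible with $h_y(\theta^\star,\theta)$ being arbitrarily small, even zero, at the realized state $y=Y_{i-1}$; convexity gives you nothing better than $h_y(f_{\theta^\star},\fhat_{i-1}^{A_n}) \geq \inf_{\theta\in A_n} h_y(\theta^\star,\theta)$. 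Your ``must be read as a uniform-in-$y$ lower bound'' step amounts to silently replacing $\sup_y$ by $\inf_y$ in the separation, a strictly stronger assumption; under it your argument is correct but it is no longer the stated lemma. The remark that this makes $H_\infty$ ``the natural metric'' also inverts the two roles of the sup: small $H_\infty$-radius is useful for coverings (uniform-in-$y$ \emph{closeness}), but $H_\infty$-separation from $\theta^\star$ does not give uniform-in-$y$ \emph{separation}. The issue propagates: in the proof of Theorem~\ref{thm:rates.markov} the covering balls $A_{nj}$ are only $H_Q$-separated (averaged over $y$) from $\theta^\star$, so the hypothesis of this lemma is not verified in the pointwise sense the telescoping argument actually needs. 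Closing this would require either a genuinely uniform-in-$y$ separation assumption or a probabilistic control of the random Ces\`aro average $n^{-1}\sum_i h_{Y_{i-1}}(\cdot)$ via stationarity/ergodicity, which is exactly the kind of mixing argument the paper claims to avoid.
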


\begin{proof}
Exactly the same as that of Lemma~\ref{lem:rates}.  
\end{proof}

The fact that data $Y_{i-1}$ appears as part of the formula for the distance $h_{Y_{i-1}}$ in \eqref{eq:h.y} necessitates the use of the max-Hellinger metric, i.e., separation with respect to $h_\infty$ implies separation with respect to $h_y$ for any $y$, even if $y$ is random.  But we are free to formulate the convergence rate theorem with a different metric.  Here we consider $H_Q(\theta^\star,\theta) = \int H_y(f_{\theta^\star}, f_\theta) \,Q(dy)$, where $Q$ is a probability measure on $\YY$.  In the non-linear Gaussian autoregression example in \citet[][Sec.7.4]{ghosalvaart2007a}, the measure $Q$ is taken to be a two-point location mixture of Gaussians.  

\begin{thm}
\label{thm:rates.markov}
Let $\Pi$ be $\eps_n$-thick at $\theta^\star$ with constant $C$ as in \eqref{eq:thick.markov}.  Suppose there exists a sequence $(\Theta_n)$ such that $\Pi(\Theta_n^c) \lesssim e^{-rn\eps_n^2}$, where $r > C+1$, and $\log N(\eps_n, \Theta_n, H_\infty) \lesssim n\eps_n^2$.  Then $\Pi_n(\{\theta: H_Q(\theta^\star, \theta) \gtrsim \eps_n\}) \to 0$ in probability.  
\end{thm}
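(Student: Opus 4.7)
The plan is to follow the same template used for Theorems~\ref{thm:rates} and \ref{thm:rates.indep}, replacing Lemma~\ref{lem:rates} by Lemma~\ref{lem:rates.markov} and using \eqref{eq:thick.lemma.markov} in place of Lemma~\ref{lem:thick}. Fix a constant $M > 0$ to be chosen later and set $B_n = \{\theta: H_Q(\theta^\star,\theta) > M\eps_n\}$. I would split $\Pi_n(B_n) \leq \Pi_n(\Theta_n^c) + \Pi_n(B_n \cap \Theta_n)$. For the first term, the hypothesis $\Pi(\Theta_n^c) \lesssim e^{-rn\eps_n^2}$ with $r > C+1$, together with \eqref{eq:thick.lemma.markov}, yields $\Pi_n(\Theta_n^c)\to 0$ in probability via exactly the total-probability argument of Lemma~\ref{lem:prior.to.posterior}, which relies only on the thickness-type denominator bound.

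For the second term, using the covering assumption I would choose an $H_\infty$-cover of $B_n \cap \Theta_n$ by balls $A_{nj} = \{\theta: H_\infty(\theta_j,\theta) < M\eps_n/2\}$, $j = 1,\dots,J_n$, with centers $\theta_j \in B_n$ and $J_n \leq e^{Rn\eps_n^2}$ for some $R > 0$. Note that $H_Q(\theta^\star,\theta_j) \leq H_\infty(\theta^\star,\theta_j)$ since $Q$ is a probability measure, so the lower bound $H_Q(\theta^\star,\theta_j) > M\eps_n$ transfers to $H_\infty(\theta^\star,\theta_j) > M\eps_n$. A triangle-inequality argument for $H_\infty$, exactly as in the proof of Theorem~\ref{thm:rates.indep}, then shows $H_\infty(\theta^\star,\theta) > M\eps_n/2$ for every $\theta \in A_{nj}$; hence $\theta^\star$ and $A_{nj}$ are $(M^2\eps_n^2/8)$-separated with respect to $h_\infty$.

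To apply Lemma~\ref{lem:rates.markov}, I also need each $A_{nj}$ to be ``convex'' in the sense that the restricted predictive transition density $\fhat_{i-1}^{A_{nj}}(\cdot \mid y)$ inherits the required separation from $f_{\theta^\star}(\cdot \mid y)$. Given any probability measure $\Phi$ on $A_{nj}$, convexity of $h_y$ in its second argument gives $h_y(f_{\theta_j},f_\Phi) \leq \int_{A_{nj}} h_y(f_{\theta_j},f_\theta)\,\Phi(d\theta) \leq M^2\eps_n^2/8$, uniformly in $y$, so another triangle inequality yields $h_y(f_{\theta^\star},f_\Phi) > M^2\eps_n^2/8$ (after tweaking $M$). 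Applied with $\Phi = \Pi_{i-1}^{A_{nj}}$ and $y = Y_{i-1}$, this ensures the hypotheses of Lemma~\ref{lem:rates.markov} hold, giving $\E(L_{n,nj}^{1/2}) \leq \Pi(A_{nj})^{1/2} e^{-(M^2/8)n\eps_n^2}$.

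Summing over $j$, bounding $\Pi(A_{nj}) \leq 1$, and combining with $J_n \leq e^{Rn\eps_n^2}$ produces $\sum_j \E(L_{n,nj}^{1/2}) \leq e^{-[(M^2/8)-R]n\eps_n^2}$. Choosing $M$ with $M^2/8 > (C+1)+R$, I would then invoke Markov's inequality on $S_n = \sum_j L_{n,nj}^{1/2}$, the denominator bound \eqref{eq:thick.lemma.markov}, and the total-probability split used in \eqref{eq:total.probability}, now applied to $S_n/I_n^{1/2}$, to conclude $\Pi_n(B_n \cap \Theta_n) \to 0$ in probability. The main obstacle, as in the independent non-iid case, is the metric mismatch: the target neighborhood is defined in terms of $H_Q$ while the covering and separation arguments require the stronger $H_\infty$; the inequality $H_Q \leq H_\infty$ resolves it cleanly, but this is the step that must be identified correctly to make the triangle-inequality argument go through.
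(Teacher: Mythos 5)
Your outline matches the paper's intended route exactly: decompose $\Pi_n(B_n)\le \Pi_n(\Theta_n^c)+\Pi_n(B_n\cap\Theta_n)$, handle $\Theta_n^c$ via the thickness-based denominator bound \eqref{eq:thick.lemma.markov} and the argument of Lemma~\ref{lem:prior.to.posterior}, cover $B_n\cap\Theta_n$ by $H_\infty$-balls, apply Lemma~\ref{lem:rates.markov} to each, and finish with Markov's inequality plus the total-probability split. That is precisely what the paper's terse proof sketch (``cover the complement of a mean-Hellinger neighborhood by max-Hellinger balls; convexity and separation analogous to Theorem~\ref{thm:rates.indep}; the rest goes like Theorem~\ref{thm:rates}'') asks for, and your observation $H_Q \le H_\infty$ is exactly the bridge the paper has in mind.

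However, there is one step in your fleshed-out argument that does not hold as written, and it is the step that actually matters. You correctly obtain, via convexity, $h_y(f_{\theta_j},f_\Phi)\le M^2\eps_n^2/8$ \emph{uniformly in} $y$; but you then assert that ``another triangle inequality yields $h_y(f_{\theta^\star},f_\Phi) > M^2\eps_n^2/8$,'' and this does not follow from what you have. For the triangle inequality $H_y(f_{\theta^\star},f_\Phi)\ge H_y(f_{\theta^\star},f_{\theta_j})-H_y(f_{\theta_j},f_\Phi)$ to produce a lower bound at the specific (random) state $y=Y_{i-1}$, you would need $H_y(f_{\theta^\star},f_{\theta_j})>M\eps_n$ \emph{for every} $y$ (or at least $u_{\theta^\star}$-a.e.). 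What you have established is only $H_\infty(\theta^\star,\theta_j)=\sup_y H_y(\theta^\star,\theta_j)>M\eps_n$, i.e., the bound at the $y$ where the supremum is (nearly) attained, which need not be $Y_{i-1}$. Equivalently: $h_\infty$-separation, as defined in the paper via $\sup_y h_y$, guarantees separation at \emph{some} $y$, while the conditional-expectation identity \eqref{eq:h.y} requires a pointwise bound at the realized $Y_{i-1}$. This cannot be repaired by ``tweaking $M$''; it is a mismatch of quantifiers. To make the iterative conditioning in Lemma~\ref{lem:rates.markov} go through one effectively needs a lower bound on $\inf_y H_y(\theta^\star,\theta_j)$, and neither $H_Q(\theta^\star,\theta_j)>M\eps_n$ (an average) nor $H_\infty(\theta^\star,\theta_j)>M\eps_n$ (a sup) supplies that. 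So either the paper's notion of separation for the Markov case should be read with $\inf_y$ in place of $\sup_y$ (in which case your transfer $H_Q\le H_\infty$ no longer gives what you need, since the inequality then points the wrong way), or an additional argument is needed to control the state-dependence of $h_{Y_{i-1}}$. You should identify this as the genuinely delicate point, rather than presenting the triangle inequality as if it resolves it.

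Everything else in your proposal -- the covering count $J_n\le e^{Rn\eps_n^2}$, the choice of $M$ with $M^2/8>(C+1)+R$, the Markov-inequality and total-probability finish -- is correct and is the same as the paper's template from Section~\ref{SS:proof1}.
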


\begin{proof}
The proof is similar to that for the independent non-iid case.  In particular, we cover the complement of a mean-Hellinger neighborhood of $\theta^\star$ by max-Hellinger-balls.  The convexity and separation calculations are analogous to those in Theorem~\ref{thm:rates.indep}, and the rest of the argument goes just like in the proof of Theorem~\ref{thm:rates}.  
\end{proof}

\section{Discussion}
\label{S:discuss}

Here we have presented an analysis of Bayesian asymptotics based primarily on predictive densities.  These densities are fundamental quantities in Bayesian statistics, for they are Bayes density estimates under a variety of loss functions.  So, in this sense, our analysis has a stronger Bayesian flavor than other existing approaches.  We have also demonstrated how our basic approach can be tuned to handle a variety of models---iid, mis-specified iid, independent non-iid, and dependent Markov processes.  For example, essentially the same predictive density convergence rate result holds in all these contexts.  

We have opted here for simplicity of presentation rather than strength of results.  For example, one can easily tailor the analysis, taking more efficient choice of coverings, etc, to achieve sharper rates.  In particular, to achieve $n^{-1/2}$ rates in finite-dimensional parametric models, a special type of covering is required \citep[e.g.,][Theorem~2.4]{ggv2000}, and this can be incorporated into the present analysis.  On the other hand, if convergence of predictive densities is the only concern, then these special coverings are not necessary---only local thickness of the prior is needed.   Indeed, it is straightforward to follow the argument in \citet[][Sec.~7.7]{ghosalvaart2007a} to show that, in a nonparametric regression context, where the true regression function $\theta^\star$ lies in an $\alpha$-smooth function class, and a spline-based prior is used, the predictive densities converge, in the sense of Proposition~\ref{prop:barron.indep}, at the minimax rate $n^{-\alpha/(2\alpha+1)}$.  In this example, however, Ghosal and van der Vaart's analysis gives full convergence of the posterior at the same rate under basically the same assumptions.  But there may be some cases where the weaker conditions of the predictive density convergence theorems may be more useful.  
\begin{itemize}
\item Consider a basic iid Bayesian density estimation problem.  For Dirichlet process location-mixtures of Gaussians, care must be taken in choosing a prior for the common component scale $\sigma$.  This is like the choice of bandwidth in classical density estimation.  Typical conditions restrict the amount of mass the prior for $\sigma$ can place near zero.  However, these conditions are primarily needed for the control of model entropy---when $\sigma$ is near zero, the class of possible models is enormous, making the entropy large.  But if convergence of predictive densities is the question of interest, so that only local thickness is required, as in Proposition~\ref{prop:barron}, then entropy is not a concern.  Therefore, one can expect practically weaker assumptions on the prior if the focus is on convergence of predictive densities.  
\vspace{-2mm}
\item For dependent data models, there may be some advantage to the predictive density-based approach.  Indeed, in Section~\ref{S:markov}, convergence of the predictive densities in Proposition~\ref{prop:barron.markov} follows without any assumptions on the mixing of the process.  This is due to the fact that only ``first moment'' conditions---bounds on the Kullback--Leibler number---are needed.  Compare this to the posterior convergence analysis in \citet[][Sec.~4]{ghosalvaart2007a} which requires assumptions on the mixing of the process and some ``higher-than-second moment'' conditions.  
\end{itemize}

Finally, we mention that this investigation began by looking at a predictive density analysis of the posterior by using a law of large numbers for martingale difference arrays.  Unfortunately, that approach seemed to be somewhat limited; specifically, a non-trivial extension to a \emph{uniform} martingale law of large numbers is needed.  That idea is still interesting, see \citet{martin.hong.mlln}, although the results here are stronger.

%\section*{Acknowledgements}

%I am grateful for the valuable comments and suggestions provided by Professor J.~K.~Ghosh and the anonymous Associate Editor and referees.

\bibliographystyle{/Users/rgmartin/Research/TexStuff/asa}
\bibliography{/Users/rgmartin/Research/mybib}

\end{document}